\numberwithin{equation}{section}
\theoremstyle{plain}
\newtheorem{theorem}[equation]{Theorem}
\newtheorem{lemma}[equation]{Lemma}
\theoremstyle{definition}
\newtheorem{definition}[equation]{Definition}
\theoremstyle{remark}
\newtheorem{remark}[equation]{Remark}
\newcommand{\dv}{\operatorname{div}}
\newcommand{\supp}{\operatorname{supp}}
\newcommand{\dist}{\operatorname{dist}}
\newcommand{\lc}{\mathcal{L}}
\newcommand{\bX}{{\bf X}}
\newcommand{\bY}{{\bf Y}}
\newcommand{\bx}{{\bf x}}
\newcommand{\by}{{\bf y}}
\newcommand{\bz}{{\bf z}}
\newcommand{\re}{\mathbb{R}}
\newcommand{\rn}{\mathbb{R}^n}
\newcommand{\ree}{\mathbb{R}^{n+1}}
\newcommand{\eps}{\varepsilon}
\newcommand{\p}{\mathcal{P}}
\newcommand{\Hc}{\mathcal{H}}
\newcommand{\C}{\mathcal{C}}
\newcommand{\A}{\mathcal{A}}
\newcommand{\po}{{\partial\Omega}}
\newcommand{\ppo}{\mathcal{P}\Omega}
\newcommand{\eo}{\partial_e\Omega}
\newcommand{\no}{\partial_n\Omega}
\newcommand{\so}{\partial_s\Omega}
\newcommand{\sso}{\partial_{ss}\Omega}
\newcommand{\ao}{\partial_a\Omega}
\newcommand{\bo}{\mathcal{B}\Omega}
\newcommand{\So}{\mathcal{S}\Omega}
\newcommand{\hm}{\omega}
\newcommand{\pom}{\partial\Omega}
\DeclareMathOperator{\diam}{diam}
\def\Yint#1{\mathchoice
    {\YYint\displaystyle\textstyle{#1}}%
    {\YYint\textstyle\scriptstyle{#1}}%
    {\YYint\scriptstyle\scriptscriptstyle{#1}}%
    {\YYint\scriptscriptstyle\scriptscriptstyle{#1}}%
      \!\iint}
\def\YYint#1#2#3{{\setbox0=\hbox{$#1{#2#3}{\iint}$}
    \vcenter{\hbox{$#2#3$}}\kern-.51\wd0}}
\def\longdash{{-}\mkern-2.0mu{-}} 
\def\tiltlongdash{\rotatebox[origin=c]{12}{$\longdash$}}
\def\tiltfiint{\Yint\tiltlongdash}
\begin{document}

\title[BMO Solvability]{BMO Solvability and Absolute Continuity of Caloric Measure}

\author{Alyssa Genschaw}

\address{Alyssa Genschaw
\\
Department of Mathematics
\\
University of Missouri
\\
Columbia, MO 65211, USA} \email{adcvd3@mail.missouri.edu}

\author{Steve Hofmann}

\address{Steve Hofmann\\
Department of Mathematics\\
University of Missouri\\
Columbia, MO 65211, USA} \email{hofmanns@missouri.edu}

\thanks{The authors were supported by 
NSF grant number DMS-1664047.}

\date{\today}
\subjclass[2000]{42B99, 42B25, 35J25, 42B20}

\keywords{BMO,  Dirichlet problem, caloric measure, parabolic measure, 
divergence form parabolic equations, 
weak-$A_\infty$, Ahlfors-David 
Regularity 
}

\begin{abstract} 
We show that BMO-solvability implies 
scale invariant quantitative absolute continuity (specifically, the weak-$A_\infty$ property) of 
caloric measure with respect to surface measure,  for an open set $\Omega \subset \ree$,
assuming as a background hypothesis only that the essential boundary
of $\Omega$ satisfies an appropriate parabolic version
 of Ahlfors-David
regularity, entailing some backwards in time thickness.  Since the weak-$A_\infty$ property of the caloric
measure is equivalent to $L^p$ solvability of the initial-Dirichlet problem, we may then deduce that
$BMO$-solvability implies $L^p$ solvability for some finite $p$.
\end{abstract}

\maketitle

{\small
\tableofcontents}

\section{Introduction}\label{s1}  
In the setting of divergence form elliptic PDE, it is well known that
solvability of the Dirichlet problem
with $L^p$ data is equivalent to
scale-invariant absolute continuity of elliptic-harmonic measure
(specifically that elliptic-harmonic measure belongs to the Muckenhoupt weight class
$A_\infty$ with respect to surface measure on the boundary).
To be more precise, in a Lipschitz or even chord-arc domain,
one obtains that the Dirichlet problem is solvable with data
in $L^p(\Omega)$ for some $1<p<\infty$, if and only if elliptic-harmonic measure $\omega$ with some fixed pole is absolutely
continuous with respect to surface measure $\sigma$ on the boundary, and the Poisson
kernel $d\omega/d\sigma$ satisfies a reverse H\"older condition with exponent $p'=p/(p-1)$;
see the monograph of Kenig \cite{Ke}, and the references cited there.   In fact, the equivalence between
$L^p$ solvability and quantitative absolute continuity holds much more generally, for any open set
with an Ahlfors-David regular boundary (see \cite{HLe} for a proof, although the result is somewhat folkloric); 
in this generality, the $A_\infty$/reverse-H\"older property is 
(necessarily) replaced by
its weak version, which does not entail doubling.

These results have endpoint versions, as well: in \cite{DKP},
Dindos, Kenig and Pipher showed that in a Lipschitz domain (or even a chord-arc domain)
elliptic-harmonic measure satisfies
an $A_\infty$ condition
with respect to surface measure, if and only if a natural
Carleson measure/BMO estimate
 holds for solutions of the Dirichlet problem with continuous data.
The results of \cite{DKP} have been extended to the setting of a 1-sided Chord-arc domain
by Z. Zhao \cite{Z}.

In the above works, the proofs relied substantially 
on quantitative connectivity of the domain, in the 
form of the Harnack Chain condition. 
More recently, the second named author and P. Le \cite{HLe} 
proved an analogous result in the absence of any connectivity hypothesis, either quantitative or qualitative:  one obtains
that BMO solvability implies scale invariant quantitative absolute continuity (the weak-$A_{\infty}$ property) 
of elliptic-harmonic measure with respect to surface measure on $\partial\Omega$, assuming only that $\Omega$ 
is an open set with Ahlfors-David regular boundary\footnote{A partial converse is also obtained in 
\cite{HLe}: namely that in the special case of the Laplace operator,  the weak-$A_\infty$ property of harmonic measure
implies BMO solvability, assuming in addition that the open set $\Omega$ satisfies an interior Corkscrew condition.}. 

The goal of the present paper is to extend the results of \cite{HLe} to the parabolic setting.  As regards geometric hypotheses,
we assume only that $\Omega \subset \ree$ is an open set whose boundary satisfies an appropriate version
of a parabolic Ahlfors-David regularity condition.
In particular, we impose no connectivity hypothesis, such as a parabolic Harnack chain condition.

We shall consider 
the heat operator
\begin{equation}
\label{eq1.1a}
L_0:=\partial_t- \lc,
\end{equation}
where $\lc := \nabla\cdot\nabla$ is the usual Laplacian in $\rn$, acting in the space variables.
In some circumstances, to be discussed momentarily, 
our results apply more generally to divergence form parabolic operators 
\begin{equation}
\label{eq1.1}
L:=\partial_t- \dv A(X,t) \nabla,
\end{equation}
defined in an open set $\Omega\subset\mathbb{R}^{n+1}$  as described above,
where $A$ is $n\times n$, real,  $L^\infty$, and satisfies the uniform ellipticity condition
\begin{equation}
\label{eq1.1*} \lambda|\xi|^{2}\leq\,\langle A(X,t)\xi,\xi\rangle := \sum_{i,j=1}^{n+1}A_{ij}(X,t)\xi_{j} \xi_{i}, \quad
  \Vert A\Vert_{L^{\infty}(\mathbb{R}^{n})}\leq\lambda^{-1},
\end{equation}
 for some $\lambda>0$, and for all $\xi\in\rn$, and a.e. $(X,t)\in \Omega$.  
We do not require that the matrix $A(X,t)$ be symmetric.   

More precisely, our results will apply to variable coefficient parabolic operators as in
 \eqref{eq1.1}, provided that the continuous Dirichlet problem (see Definition \ref{bvpdef} - I below), 
 is solvable in $\Omega$
 (and hence that parabolic measure for $L$ can be defined).  Beyond the class of constant
 coefficient parabolic operators, such a solvability result holds when the coefficients are $C^1$-Dini:
 indeed, we shall impose an appropriate parabolic version of Ahlfors-David regularity 
 which in particular  implies the capacitary Wiener criterion, valid in the case of $C^1$-Dini coefficients,
proved by Fabes, Garofalo and Lanconelli \cite{FGL}.

Before stating our main theorem, we briefly
introduce some of the concepts and notation
to be used.  All additional terminology used in 
the statement of the theorem, and not  discussed here or above, 
will be defined precisely in the sequel.  For now, we note that all distances and diameters
are taken with respect to the parabolic distance \eqref{pardist}, and that $\delta(X,t) := \dist((X,t),\eo)$,
where $\eo$ denotes the {\em essential boundary} 
(see Definition \ref{parabolicb} below) of an open set $\Omega\subset \ree$.  
We further note that ``surface measure" 
$\sigma$ on the {\em quasi-lateral boundary}\footnote{See Definition \ref{parabolicb}.
In the present work, the ADR condition that we impose will imply that the quasi-lateral 
boundary is a natural substitute for the lateral boundary (in cylindrical domains, for example, they are the same).  
See also Remarks
\ref{r-adr} and \ref{r-adr2}.} 
$\Sigma$, is defined by
$d\sigma=d\sigma_{\!s}ds$, 
where $d\sigma_{\!s}:=\Hc^{n-1}\vert_{\Sigma_s}$,
the restriction of 
$(n-1)$-dimensional Hausdorff measure to the time slice $\Sigma_s := \Sigma\cap \{t\equiv s\}$.
We let $T_{min}, T_{max}$ denote, respectively, the smallest and largest values of the time co-ordinate
occurring in $\overline{\Omega}$; see \eqref{tminmax}.

We note that for an arbitrary open set $\Omega\subset \ree$, caloric measure
may be constructed via the PWB method, since continuous functions on the essential boundary
are resolutive;  see \cite{W1} or \cite[Chapter 8]{W2}.

Given an open set $\Omega\subset \ree$ 
 and a divergence form parabolic operator $L$ as above, for which the continuous Dirichlet problem is solvable,
 we shall say that 
the  initial-Dirichlet problem 
(see Definition \ref{bvpdef} below) is \textit{BMO-solvable}\footnote{Perhaps ``VMO-solvable" would be a 
more appropriate term, but ``BMO-solvable" seems to be entrenched in the literature.  In less austere settings, 
the two notions are equivalent, at least in the elliptic case; see \cite[Remark 4.20]{HLe}.} for 
$L$ in $\Omega$ if for all continuous $f$ with 
compact support on $\Sigma$, the solution $u$ of the  initial-Dirichlet problem with data $f$ 
satisfies the Carleson measure estimate
\begin{multline}\label{1.3}
\underset{(x,t)\in \Sigma, \,0<r<R(t)}\sup 
\,\,\dfrac{1}{\sigma\left(\Delta_r\right)}\iint_{\Omega\cap Q_r(x,t)}\left(|\nabla u(Y,s)|^2+|\delta(Y,s)\partial_s u(Y,s)|^2\right)\delta(Y,s)\,dYds \\[4pt]
\leq \, C||f||_{BMO(\Sigma)}^2,
\end{multline}
where 
 $\Delta_r\!:=Q_r(x,t)\cap \Sigma$, and $R(t) \!:= \!\min\!\Big(R_0,\sqrt{t-T_{min}}/\big(4\!\sqrt{n}\,\big)\Big)$, with
$R_0\!:=\diam(\Sigma)$.
We recall that $T_{max}-T_{min} \gtrsim R^2_0$; see the discussion
preceeding \cite[Theorem 2.9]{GH}.

For $(X,t)\in \Omega$,
 we let
 $\hm_L^{X,t}$  denote parabolic measure for $L$ with pole at $(X,t)$, and if the dependence on
 $L$ is clear in context, we shall simply write $\hm^{X,t}$.

The main result of this paper is the following.  All terminology used in the statement of the theorem and not 
discussed
already, will be defined precisely in the sequel.

\begin{theorem}\label{tmain} Let $L$ be a divergence form parabolic operator defined on 
$\Omega$.  
Let $\Sigma$ be globally time-backwards ADR, 
and assume further that if $R_0:= \diam \Sigma = \infty$, then $T_{min} = -\infty$.

If the initial-Dirichlet problem for $L$ is BMO-solvable in $\Omega$, then the parabolic measure 
belongs to weak-$A_{\infty}$ in the following sense: for every parabolic cube $Q:=Q_r(x_0,t_0)$, with $(x_0,t_0)\in \Sigma$ and 
$0<r<\min\left(R_0,\sqrt{t_0-T_{min}}/\big(4\!\sqrt{n}\,\big)\right)$, 
and for all $(Y,s)\in \Omega\setminus 4Q$, parabolic measure $\omega_L^{Y,s}\in$ weak-$A_{\infty}(\Delta)$, 
where the parameters in the weak-$A_{\infty}$ condition are uniform in $\Delta$.
\end{theorem}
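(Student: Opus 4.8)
The plan is to adapt the strategy of \cite{HLe} from the elliptic to the parabolic setting, replacing harmonic measure with caloric/parabolic measure, Euclidean cubes with parabolic cubes, and $\delta$-distance with the parabolic distance to the essential boundary. The core scheme is a stopping-time (corona-type) decomposition driven by the density of parabolic measure. First I would fix a surface cube $\Delta = Q \cap \Sigma$ with $Q = Q_r(x_0,t_0)$ admissible in the sense of the theorem, and a pole $(Y,s) \in \Omega \setminus 4Q$; normalize $\omega := \omega_L^{Y,s}$. The weak-$A_\infty$ conclusion is equivalent to the following quantitative statement: there exist $\eta, \theta \in (0,1)$ (depending only on the ADR constants, ellipticity, and the BMO-solvability constant $C$ in \eqref{1.3}) such that for every subcube $\Delta' = Q' \cap \Sigma$ with $Q' \subset Q$, and every Borel set $F \subset \Delta'$ with $\sigma(F) \le \eta \,\sigma(\Delta')$, one has $\omega(F) \le (1-\theta)\,\omega(\Delta')$. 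So the entire proof reduces to proving this one implication.

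To prove that implication I would argue by contradiction / iteration: suppose $\sigma(F)$ is very small relative to $\sigma(\Delta')$ but $\omega(F)$ is close to $\omega(\Delta')$. Build a sawtooth/Carleson region over $\Delta'$ by a stopping-time argument, halting the dyadic parabolic subcubes of $\Delta'$ either (a) when the parabolic-measure density $\omega(\Delta'')/\sigma(\Delta'')$ first jumps up by a large factor relative to its value at $\Delta'$, or (b) when $\sigma(F \cap \Delta'')/\sigma(\Delta'')$ first exceeds a threshold; let $\mathcal{F}$ be the collection of maximal stopping cubes, and let $\Omega^* = \Omega^*_{\Delta'}$ be the associated sawtooth domain. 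One shows, using parabolic ADR (with its backward-in-time thickness, which is exactly what gives the needed interior Corkscrew points backward in time and hence Harnack access to the pole, plus non-degeneracy of $\omega$), that the ``bad'' part of $\Delta'$ captured by the stopping cubes of type (a) has small $\omega$-measure via a parabolic Carleson-packing/maximum-principle estimate, while the type-(b) cubes cover $F$ up to a set of small measure. The key quantitative input is to test the BMO-solvability estimate \eqref{1.3} against (a regularized, compactly supported approximation of) the data $f = \mathbf{1}_{\text{complement of the type-(a) region}}$, or more precisely against a bump adapted to $\Delta'$: this forces the solution $u \approx \omega(\cdot \setminus \text{bad set})$ to have bounded oscillation, so its square-function/Carleson norm over $\Omega^* \cap Q'$ is controlled by $\sigma(\Delta')$, and $\|f\|_{BMO(\Sigma)} \lesssim 1$ since $f$ is an indicator of a set of controlled relative measure. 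Then one estimates the Carleson integral from below on the sawtooth region — where $u$ transitions from near $1$ to near $0$ — using interior parabolic Caccioppoli/Poincaré inequalities and Harnack's inequality on parabolic Whitney cubes, together with the non-degeneracy of $\omega$ on the stopping cubes, to obtain a lower bound proportional to $\omega(\Delta') \cdot (\text{something not small})$. Comparing the upper and lower bounds yields $\omega(\Delta') \lesssim C\, \sigma(\Delta')/\sigma(\Delta') \cdot(\ldots)$, and pushing the contradiction through gives the dichotomy above with explicit $\eta,\theta$.

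The main obstacle, I expect, is genuinely parabolic: the lack of time-symmetry. Parabolic measure, Harnack's inequality, and the Corkscrew/Carleson-box machinery all behave asymmetrically in $t$, so one must be careful that the stopping-time region, the sawtooth domain, and the pole $(Y,s) \in \Omega \setminus 4Q$ are all connected by admissible (time-monotone) Harnack chains through backward-in-time Corkscrew points — this is precisely why the hypothesis is \emph{time-backwards} ADR rather than ordinary ADR, and why the extra assumption ($T_{min} = -\infty$ when $R_0 = \infty$) is needed to guarantee enough room backward in time at all scales. A second technical point is that the Carleson functional in \eqref{1.3} contains the extra term $|\delta \partial_s u|^2$, so the lower bound on the sawtooth must be produced using only $|\nabla u|^2 \delta$, which is fine but requires the parabolic Poincaré inequality (coupling space oscillation with time oscillation correctly) rather than the elliptic one. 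A third point is the definition and regularity of the caloric measure for merely open $\Omega$ via the PWB method, and checking resolutivity/compatibility with the variable-coefficient solvability hypothesis — but this is handled by the background hypotheses (parabolic ADR $\Rightarrow$ Wiener criterion via \cite{FGL}) cited in the excerpt, so I would invoke it rather than reprove it. Everything else — dyadic cubes on $\Sigma$ (à la parabolic Christ cubes), Caccioppoli, Harnack chains on Whitney regions, and the packing of stopping cubes — is parallel to \cite{HLe} and should go through with the parabolic modifications.
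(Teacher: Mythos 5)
Your opening reduction is not correct, and it steers the whole argument toward a statement that is strictly stronger than the theorem. The dichotomy you propose --- for a fixed pole $(Y,s)\in\Omega\setminus 4Q$, ``$\sigma(F)\le\eta\,\sigma(\Delta')$ implies $\omega(F)\le(1-\theta)\,\omega(\Delta')$'' for \emph{every} subcube $\Delta'\subset\Delta$ --- is the same-cube (strong $A_\infty$) condition; iterating it over dyadic subcubes gives $\omega(F)\lesssim(\sigma(F)/\sigma(\Delta'))^{\theta}\omega(\Delta')$, which entails much more than weak-$A_\infty$ and is false in this generality: with no connectivity hypothesis, $\omega^{Y,s}$ need not be doubling, $\omega(\Delta')$ can be tiny compared with $\omega(2\Delta')$, and the measure can concentrate on a $\sigma$-small portion of $\Delta'$, all of which is compatible with the weak reverse H\"older conclusion \eqref{eq1.wRH}. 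The paper's reduction is different and is the key structural step you are missing: one verifies the $(\theta,\beta)$-\emph{local ampleness} condition \eqref{eq1.4} for poles at interior points $(X,t)$ adjacent to each surface cube (where nondegeneracy is available through the Bourgain-type Lemma \ref{Bourgain}), and then invokes the parabolic Bennewitz--Lewis theorem \cite[Theorem 1.6]{GH} (Theorem \ref{lemma}) to transfer this to the weak-$RH$ estimate for distant poles. Without that transfer theorem (or a proof of it), your scheme has no way to relate information at the distant pole $(Y,s)$ to the local geometry of $\Delta'$.

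Two further ingredients of your plan do not exist under the stated hypotheses. First, time-backwards ADR is a lower bound on the \emph{surface measure} of $\Sigma$ backwards in time; it does not produce interior corkscrew points, let alone time-monotone Harnack chains from a sawtooth region to the pole --- the paper deliberately imposes no such connectivity, and its only ``Harnack chain'' is the explicit parabola path built inside the touching cube in the proof of Claim 1, which connects $(X,t)$ to a nearby boundary cube $\Delta_1$, not to a distant pole. So the lower bound you want on the Carleson integral over the sawtooth (``non-degeneracy of $\omega$ on the stopping cubes,'' oscillation of $u$ on Whitney cubes linked to $\omega$) has no mechanism behind it. Second, testing \eqref{1.3} with indicator data gives $\|f\|_{BMO(\Sigma)}\approx 1$ and hence only a bound by the (possibly large) solvability constant, which yields no smallness and no contradiction. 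The heart of the \cite{DKP}/\cite{HLe} argument, which the paper adapts, is to test with $f=\max\bigl(0,1+\gamma\log\mathcal{M}(1_{A_1})\bigr)$, whose BMO norm is $\lesssim\gamma$ and can be made arbitrarily small while still dominating $1_{A_1}$; this, combined with the pointwise bound of Claim 2 ($u^2(X,t)\lesssim$ localized Carleson integral $+\,C\epsilon^{2\alpha}$, proved via Moser's estimate, parabolic polar coordinates, and boundary H\"older continuity) and a mollification to continuous data (Lemma \ref{3.5}, since \eqref{1.3} is only assumed for continuous compactly supported $f$), is what produces $\omega^{X,t}(A_1)\le(C_\epsilon\gamma+C\epsilon^{\alpha})\,\omega^{X,t}(\Delta_{X,t})$ and hence \eqref{eq1.4}. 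As written, your proposal is missing the reduction theorem, the small-BMO test functions, and the geometric input for the lower bound, so the argument would not go through.
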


We note that we are implicitly assuming here, as above, that the continuous Dirichlet problem is solvable for
$L$;  we know that this is true if $L$ is the heat operator, or if the coefficients of $L$ are $C^1$-Dini: see Remarks \ref{r1.22} and \ref{r-adr2}.   We note that our assumption of solvability of the continuous Dirichlet problem is used only qualitatively:
the constants in our estimates will depend only on dimension, ellipticity, and the constant in the BMO-solvability
estimate \eqref{1.3}.

\begin{remark} By \cite[Theorem 2.9]{GH}, the weak-$A_\infty$ property of caloric (or parabolic) measure
is equivalent to $L^p$ solvability of the initial-Dirichlet problem (see \cite{GH} for a precise statement), 
for some $p<\infty$; hence the latter 
also follows from BMO solvability.
\end{remark}

\begin{remark} In the elliptic case, the analogue of Theorem \ref{tmain} has a partial converse
\cite[Theorem 1.6]{HLe}, valid
for the Laplacian: under the additional assumption that $\Omega$ satisfies an interior Corkscrew condition,
if $\pom$ is ADR, and harmonic measure belongs to weak-$A_\infty$ with respect to surface measure on $\pom$, then
the Dirichlet problem for Laplace's equation is BMO-solvable.  In the parabolic setting,
this converse result remains open.  The proof in the elliptic case relies on 
square-function/non-tangential-maximal-function estimates, which in turn are obtained by invoking
results of \cite{HM} (see also \cite{HLMN}, \cite{MT}) to deduce uniform rectifiability of $\pom$;
see \cite{HM}, \cite{HMM1}, \cite{HMM2} (as well as \cite{GMT}, \cite{AGMT} for related converse results).  
The machinery created in these references, and exploited in \cite{HLe}, has yet to be developed in the parabolic setting.

\end{remark}

The paper is organized as follows.  In the remainder of this section, 
we present some basic notations and definitions.  In Section \ref{s2}, we state two lemmas and a corollary which we then use to prove Theorem \ref{tmain}. In Section \ref{s3} we prove Theorem \ref{tmain}.

\noindent{\bf Notation and Definitions}
For a set $A\subset \ree$, we define
\begin{equation}\label{tminmax}
T_{min}(A):=\inf\{T: A\cap\{t\equiv T\} \neq \emptyset\}\,,\quad T_{max}(A):=\sup\{T: A\cap\{t\equiv T\} \neq \emptyset\}
\end{equation}
(note:  it may be that $T_{min}(A) = -\infty$, and/or that $T_{max}(A) = +\infty$).  In the special case that $A=\Omega$, an open set that
has been fixed, we will simply write
$T_{min} = T_{min}(\Omega)$ and $T_{max}=T_{max}(\Omega)$.

\begin{definition}[{\bf Parabolic cubes}]\label{paraboliccube}
An (open) parabolic cube in $\rn \times \re$ with  center
$(X,t)$:
\begin{multline}\label{eqcubedef}
Q_r(X,t):=Q((X,t),r)\\:=\{(Y,s)\in \rn \times \re: |X_i-Y_i|<r \,,  \, 1\leq i \leq n, \,
t-r^2<s < t+r^2\}.
\end{multline}
With a mild abuse of terminology, we refer to $r$ as the ``parabolic sidelength" (or simply the ``length")
of $Q_r(X,t)$.
We shall sometimes simply write $Q_r$ to denote a cube of parabolic length $r$, when the center is implicit,
and for $Q=Q_r$, we shall write $\ell(Q) = r$.

We also consider the time-backward and time-forward versions:  
\begin{multline*}
Q^-((X,t),r):=Q_r^-(X,t)\\:=\{(Y,s)\in \rn\times \re: |X_i-Y_i|<r\,,\, 
1\leq i\leq n\,, \, t-r^2<s< t\},
\end{multline*}
\begin{multline*}
Q^+((X,t),r):=Q_r^+(X,t)\\:=\{(Y,s)\in \rn\times \re : |X_i-Y_i|<r \,,\,
1\leq i\leq n\,,\, t< s < t+r^2\}\,.
\end{multline*}
We shall sometimes also use the letter $P$ to denote parabolic cubes in $\ree$.
\end{definition}

\begin{definition}[{\bf Classification of boundary points}]\label{parabolicb}
Following \cite{L}, given an open set $\Omega\subset \ree$, we define its {\em parabolic
boundary} $\p\Omega$ as
$$\mathcal{P}\Omega:=\left\{(x,t)\in\partial\Omega: \forall r>0 \,, \,Q_r^-(x,t)\,
\text{ meets }\, \ree\setminus \Omega\right\}.
$$
 The {\em bottom boundary}, denoted $\bo$, is defined as 
 \[\bo:= \left\{(x,t)\in\ppo: \exists \,\eps>0 \,  \text{ such that } \,Q_\eps^+(x,t)\,
\subset \Omega\right\}.\]
The {\em lateral boundary}, denoted $\So$, is defined as
$\So :=\ppo\setminus \bo$.

 Following \cite{W1,W2}, we also define the {\em normal boundary}, denoted
 $\no$, to be equal to the parabolic boundary in a bounded domain, while in an unbounded domain, we 
 append the point at infinity:
 $\no = \ppo\cup\{\infty\}$.   The {\em abnormal boundary} is defined as $\ao:= \pom\setminus \no$, thus:
  \[\ao:= \left\{(x,t)\in\partial\Omega: \exists \,\eps>0 \,  \text{ such that } \,Q_\eps^-(x,t)\,
\subset \Omega\right\}.\]
The abnormal boundary is further decomposed into $\ao=\so\cup\sso$ (the
{\em singular boundary} and {\em semi-singular boundary}, respectively), where
\[\so:=  \left\{(x,t)\in\partial_a\Omega: \exists \,\eps>0 \,  \text{ such that } \,Q_\eps^+(x,t)\,
\cap \Omega=\emptyset\right\},\]
and
\[\sso:=  \left\{(x,t)\in\partial_a\Omega: \forall \,r>0 \,  \,Q_r^+(x,t)\,\text{ meets }\,
\Omega\right\}.\]
The {\em essential boundary} 
$\eo$, is defined as
\begin{equation}\label{sigma}
\eo\,:=\, \no\,\cup\, \sso \,=\, \partial\Omega \setminus \so
\end{equation}
(where we replace $\pom$ by $\pom\cup\{\infty\}$ if $\Omega$ is unbounded).
Finally, 
 we define the {\em quasi-lateral boundary} $\Sigma$ to be
\begin{equation}\label{sigmazero}
\Sigma:= \,\left\{
\begin{array}{l}
\po \,,\quad  \text{if } T_{min} = -\infty \, \text{ and } T_{max} = \infty\\[4pt]
\po \setminus (\bo)_{T_{min}}\,, \quad  \text{if } T_{min} > -\infty \, \text{ and } T_{max} = \infty\\[4pt]
\po\setminus (\so)_{T_{max}} \,, \quad  \text{if } T_{max} <\infty \, \text{ and } T_{min} = - \infty
 \\[4pt]
 \po\setminus \left( (\bo)_{T_{min}} \cup (\so)_{T_{max}}\right) \,, 
 \quad  \text{if }\, -\infty<T_{min}<T_{max} <\infty \,.
 \end{array}\right.
\end{equation}
where $(\bo)_{T_{min}}$ is the 
time slice of $\bo$ with $t\equiv T_{min}$,
and $(\so)_{T_{max}}$ is the time slice of $\so$ with $t \equiv T_{max}$. 
Observe that for a cylindrical domain
$\Omega= U \times (T_{min},T_{max})$, with $U\subset \rn$ a domain in the spatial variables,  
then $\Sigma$ would simply be the usual lateral boundary.
\end{definition}

 Caloric measure is supported on the essential boundary; see
 \cite{Su}, or  \cite{W1,W2}.
 
 \begin{list}{$\bullet$}{\leftmargin=0.4cm  \itemsep=0.2cm}
\item We use the letters $c,C$ to denote harmless positive constants, not necessarily
the same at each occurrence, which depend only on dimension and the
constants appearing in the hypotheses of the theorems (which we refer to as the
``allowable parameters'').  We shall also
sometimes write $a\lesssim b$ and $a \approx b$ to mean, respectively,
that $a \leq C b$ and $0< c \leq a/b\leq C$, where the constants $c$ and $C$ are as above, unless
explicitly noted to the contrary.  
\item We shall
use lower case letters $x,y,z$, etc., to denote the spatial component of points on the boundary $\pom$, 
and capital letters
$X,Y,Z$, etc., to denote the spatial component of generic points in $\ree$ (in particular those in $\Omega$).
\item For the sake of notational brevity, we shall sometimes also use boldface capital letters
to denote points in space time $\ree$, and lower case boldface letters to denote points on $\pom$;  thus,
\[ \bX = (X,t) , \quad \bY = (Y,s) , \quad
\text{and} \quad  \bx = (x,t) , \quad \by = (y,s) ,\]

\item We shall orient our coordinate axes so that time runs from left to right.
\item We let $\mathbb{S}^n$ denote the unit sphere in $\ree$.
\item $\Hc^{d}$ denotes $d$-dimensional Hausdorff measure.
\item For $A\subset \ree$, let $A_{s}:=\{(X,t)\in A: t\equiv s\}$ denote the time slice of $A$ with $t\equiv s$. 
\item We  let
$d\sigma=d\sigma_{\!s}ds$ denote the ``surface measure'' on the quasi-lateral
 boundary
$\Sigma$, where $d\sigma_{\!s}:=\Hc^{n-1}\vert_{\Sigma_s}$, and $\Sigma_s$ is the time slice of $\Sigma$, with
$t\equiv s$.

\item The parabolic norm of ${\bf X}=(X,t)\in \ree$, denoted $\|{\bf X}\|$, is the unique solution
$\rho$ of the equation
$$\frac{|X|^2}{\rho^2} + \frac{t^2}{\rho^4} = 1\,.$$
We observe that the parabolic norm satisfies
\begin{align}\label{pardist}
\|\bX\|=||(X,t)||\approx |X|+|t|^{1/2}.
\end{align}
The parabolic $\ell^\infty$ norm is defined by
\begin{equation}\label{pardistinfty}
\|\bX\|_{\ell^\infty}=||(X,t)||_{\ell^\infty}:= \max\left\{|X_1|,...,|X_n| , |t|^{1/2}\right\}\,.
\end{equation}
Of course, the parabolic norm and parabolic $\ell^\infty$ norm induce corresponding distances
on $\ree$, which are comparable to each other.
\item If $\bX\in \Omega$, we let $\delta(\bX):=\text{dist}(\bX, \eo),$ and
$\delta_\infty(\bX):=\text{dist}_\infty(\bX, \eo),$ denote the 
parabolic distance, respectively, the parabolic $\ell^\infty$ distance, to the essential boundary.
We note that $\delta_{\infty}(X,t) \approx \delta(X,t)$,
with uniform implicit constants depending only on dimension.   
\end{list}

We shall find it convenient to work with ``touching cubes" and ``touching points" with respect to
the parabolic
$\ell^\infty$  distance to the essential boundary: 
\begin{definition}\label{def-touch}
Given a point $(X,t)\in \Omega$,  let $Q_{\star}(X,t)$ denote  the ``touching cube" for the point $(X,t)$, i.e.,
set  $Q_{\star}(X,t):= Q_r(X,t)$, where
$$r=r_{\star}(X,t) := \sup\left\{\rho >0 : Q_\rho(X,t) \cap \partial_e\Omega =\emptyset\right\}\,,$$
 so that (since our cubes are open)
 $Q_{\star}(X,t) \cap \partial_e\Omega =\emptyset$, and $\partial Q_{\star}(x,t)$ meets $\partial_e\Omega$.
We shall say that 
$(\hat{x},\hat{t})\in\partial_e \Omega$ is a ``touching point" for $(X,t)$,
if $(\hat{x},\hat{t})\in\partial Q_{\star}(x,t) \cap \partial_e \Omega$.
Note that $\delta_{\infty}(X,t):= r_{\star}(X,t)$.

 We further note that  if $\sqrt{t-T_{min}} >\delta_\infty(X,t)$, then
 $(\hat{x},\hat{t})\in \Sigma$, for any touching point 
 $(\hat{x},\hat{t})$ of $(X,t)$, i.e., in this case
 $\delta_\infty(X,t) = \dist_\infty((X,t),\Sigma)$.
 \end{definition}

\begin{list}{$\bullet$}{\leftmargin=0.4cm  \itemsep=0.2cm}
\item For a set $A\subset \ree$, we shall write $\diam(A)$ to denote the 
diameter of $A$ with respect to the parabolic distance, i.e.,
\begin{equation*}
\diam(A) := \sup_{\left((X,t),(Y,s)\right)\in A\times A}\|(X,t) - (Y,s)\|\,.
\end{equation*}

\item Given a Borel measure $\mu$, and a Borel set $A\subset \rn$, with positive and finite $\mu$ measure, we
set $\fint_A f d\mu := \mu(A)^{-1} \int_A f d\mu$;  if $A$ is a subset of space-time $\ree$, we then write
$\tiltfiint_A f d\mu := \mu(A)^{-1} \iint_A f(X,t)\, d\mu(X,t)$.

\item A ``surface cube" on $\Sigma$ is defined by
$$\Delta =Q\cap \Sigma\,,$$
where $Q$ is a parabolic cube centered on $\Sigma$,
or more precisely,
$$\Delta = \Delta_r(x,t):= Q_r(x,t)\cap\Sigma\,,$$
with $(x,t)\in \Sigma$.  We note that the
``surface cubes'' are not the same as the dyadic cubes
of M. Christ \cite{Ch}
 on $\Sigma$; we apologize to the reader for the possibly confusing terminology.

\end{list}
 
 \begin{definition}\label{bvpdef}
We define the following boundary value problems.  
The second
is relevant only in the case that $T_{min} = -\infty$.

 \begin{list}{}{\leftmargin=0.4cm  \itemsep=0.2cm}

   \item I. Continuous Dirichlet Problem:   
 \begin{center}
$(D)\left\{ \begin{array}{rl}Lu&\!\!=0  \textrm{ in } \Omega  \\
u\vert_{\eo}&\!\!=f \in C_c(\eo)
\\ u&\!\!\in C(\Omega\cup\no)\,.
\end{array} \right.$
\end{center} 
If $\Omega$ is unbounded, 
we further specify that $\lim_{\|\bX\|\to\infty}
u(\bX)=0$.
Here,  we interpret the statement $u\vert_{\eo}=f \in C_c(\eo)$
to mean that 
 \[\lim_{(X,t)\to (y,s)} u(X,t) = f(y,s)\,,\quad (y,s) \in \no\,,\]
and
 \[\lim_{(X,t)\to (y,s^+)} u(X,t) = f(y,s)\,,\quad (y,s) \in \sso\,.\]
If  the preceeding problem is solvable for all $f\in C_c(\eo)$, then we say that the
``continuous Dirichlet problem is solvable for $L$."

 \item II. $L^p$ Dirichlet Problem: 
 \begin{center}
$(D)_p\left\{ \begin{array}{rl}Lu&=0  \textrm{ in } \Omega  \\
u\vert_{\Sigma}&=f\in L^p(\Sigma)
\\&N_* u\in L^p(\Sigma)\,.
\end{array} \right.$
\end{center} 
 
  \item III.  Continuous Initial-Dirichlet Problem: 
 \begin{center}
$(I\text{-}D)\left\{ \begin{array}{rl}Lu&=0  \textrm{ in } \Omega^T:=\Omega\cap \{t>T\}  \\
u(X,T)&=0  \text{ in } \Omega_{T}=  \Omega\cap \{t\equiv T\} \\
u\vert_{\Sigma^T}&=f\in C_c(\Sigma^T)
\\& u\in C(\Omega^T \cup \no^T)\,.
\end{array} \right.$
\end{center} 
Here, $\Sigma^T$ denotes the quasi-lateral boundary of the domain
$\Omega^T$.
The statement $u\vert_{\Sigma^T}=f\in C_c(\Sigma^T)$ is intepreted as in problem I,
and if  $\Omega^T$ is unbounded, 
we further specify that $\lim_{\|\bX\|\to\infty}
u(\bX)=0$.  
 
 \item IV. $L^p$ Initial-Dirichlet Problem: 
 \begin{center}
$(I\text{-}D)_p\left\{ \begin{array}{rl}Lu&=0  \textrm{ in } \Omega^T:=\Omega\cap \{t>T\}  \\
u(X,T)&=0  \text{ in }  \Omega_{T}=  \Omega\cap \{t\equiv T\}\\
u\vert_{\Sigma^T}&=f\in L^p(\Sigma^T)
\\&N_* u\in L^p(\Sigma^T)\,.
\end{array} \right.$
\end{center} 
\end{list}
In problems II and IV, the statement $u\vert_{\Sigma}=f\in L^p(\Sigma)$ (resp., 
$u\vert_{\Sigma^T}=f\in L^p(\Sigma^T)$) is understood in the sense of
parabolic non-tangential convergence.  We shall discuss this issue, as well as the precise definition of the non-tangential
maximal function $N_*u$, in the sequel. In problems III and IV, the statement
$u(X,T)=0  \text{ in } \Omega_{T}$ means that $u$ vanishes continuously on $\Omega_T$.
\end{definition}
 
\begin{definition}\label{parabolic}({\bf Caloric and Parabolic Measure})
Let $\Omega\subset \ree$ be an open set. 
Let $u$ be the  
PWB solution (see \cite{W1}, \cite[Chapter 8]{W2})
of the Dirichlet problem  for the heat equation, with data $f \in C_c(\eo)$. 
By the Perron construction, 
for each point $(X,t) \in \Omega$, 
the mapping 
$f \mapsto u(X,t)$ is  bounded, and by
the resolutivity of functions $f\in C(\eo)$ (see \cite[Theorem 8.26]{W2}),
it is also linear.
The caloric 
measure with pole $(X,t)$ is the probability measure $\omega^{X,t}$ given by 
the Riesz representation 
theorem, such that
\begin{equation}\label{parmeasuredef}
u(X,t)=\iint_{\eo}f(y,s)\,d\omega^{X,t}(y,s).
\end{equation}
For a general divergence form parabolic operator  $L$ as in \eqref{eq1.1}-\eqref{eq1.1*}, 
parabolic measure $\hm^{X,t}=\hm_L^{X,t}$ may be defined similarly, provided that  the continuous Dirichlet problem is solvable for $L$.
\end{definition}

\begin{definition}\label{defadr} ({\bf  ADR})  (aka {\it Ahlfors-David regular [in the parabolic sense]}).
Let $\Omega\subset \ree$. We say that  the quasi-lateral boundary 
$\Sigma$ 
 is {\em globally ADR} (or just ADR)
if  there is a constant $M_0$ such that for every parabolic cube $Q_r = Q_r(x,t)$, 
centered on $\Sigma$,
and corresponding surface cube
$\Delta_r=Q_r\cap \Sigma$, with $r<\diam(\Omega)$,
\begin{equation}\label{eq.adr}
\frac1{M_0} r^{n+1}\leq \sigma(\Delta_r) \leq M_0 r^{n+1}\,.
\end{equation}
 We also say that $\Sigma$ is ADR on a surface cube $\Delta=Q\cap \Sigma$, if there is
a constant $M_0$ such that \eqref{eq.adr} holds for every surface cube $\Delta_r=Q_r\cap\Sigma$,
with $Q_r \subset Q$ and centered on $\Sigma$.
\end{definition}

\begin{definition}\label{defbackadr} 
({\bf Time-Backwards ADR, aka TBADR})
Given a parabolic cube $Q$ centered on $\Sigma$, 
and corresponding surface cube
$\Delta=Q\cap \Sigma$,
we say that $\Sigma$ is time-backwards ADR on
$\Delta$ if it is ADR on $\Delta$, and if, in addition there exists a uniform constant $c_0>0$ 
such that
\begin{equation}\label{eq.backadr} 
c_0r^{n+1}\leq \sigma(\Delta^-_r)\,, 
\end{equation}
for every
$\Delta^-_r=Q^-_r\cap \Sigma,$ where
$Q_r\subset Q$ is centered at some 
point $(x,t)\in \Sigma$.   Note that by definition, 
if $\Sigma$ is TBADR on $\Delta = Q\cap\Sigma$, then it is TBADR on 
every $\Delta'=Q'\cap\Sigma$ with $Q'\subset Q$, and $Q'$ centered on $\Sigma$.

 If  $\Sigma$ is time-backwards ADR on every
$\Delta =\Sigma \cap Q_r(x_0,t_0)$, for all $(x_0,t_0)\in\Sigma$, and for all $r$ with 
$0<r<\sqrt{t_0-T_{min}}/(4\!\sqrt{n})$,  
then we shall simply say that $\Sigma$ is (globally) time-backwards ADR (and
we shall refer to such $\Delta$ as ``admissible"; note that if $T_{min} = -\infty$, then there is no restriction on $r$, and in that case every surface cube is admissible). 
\end{definition}

\begin{remark} \label{r1.22} The assumption of some backwards in time thickness, 
as in Definition \ref{defbackadr},
is rather typical in the parabolic setting.
 See, e.g., the backwards in time
 capacitary conditions in \cite{La}, \cite{EG}, \cite{GL}, \cite{FGL}, \cite{GZ}, \cite{BiM}. 
Moreover, it is not hard to verify that by the result of \cite{EG} (or of \cite{GL}, \cite{FGL}),  
time-backwards ADR on some surface cube $\Delta$ implies 
parabolic Wiener-type regularity of each point in $\Delta$ (and thus global time-backwards
ADR implies regularity of the parabolic boundary $\ppo$), in the case of the heat equation
\cite{EG}, or for $L$ with smooth coefficients \cite{GL}, or with $C^1$-Dini coefficients \cite{FGL}.
\end{remark}

 \begin{remark}\label{r-adr}
By \cite[Theorem 8.40]{W2}, the abnormal boundary $\ao$ is contained in a 
countable union of hyperplanes orthogonal
to the $t$-axis.   
Moreover, the same is true for the bottom boundary $\bo$, since its image under the change of variable
$t \to -t$ is contained in $\ao^*$, for the domain $\Omega^*$ obtained from $\Omega$ by the 
same change of variable.  Thus, $\sigma(\bo) = 0$.
 \end{remark}

\begin{remark}\label{r-adr2} The time-backwards ADR condition ensures that
the quasi-lateral boundary $\Sigma$ is a natural substitute for the lateral boundary, 
for the general class of domains that we consider;
in particular,  
$\sso =\emptyset=\so\setminus \{t\equiv T_{max}\}$, at least locally on any surface cube $\Delta$
on which TBADR holds,
and thus (except for the 
possible point at $\infty$),  $\eo =\ppo=\Sigma$,  in the set
$\{(X,t): t>T_{min}\}$.
 Moreover, if $\omega^{X,t}\ll\sigma$,  on some surface cube $\Delta$ (as we conclude in
 Theorem \ref{tmain}), then
$\hm^{X,t}(\bo \cap \Delta)=0$, by Remark \ref{r-adr}.
\end{remark}

\begin{remark}\label{1.8} Time-backwards ADR yields an 
apparently stronger property: specifically,  
 that if $\Sigma$ 
is time-backwards ADR on $\Delta=\Delta_r=\Sigma\cap Q_r(x_0,t_0)$, 
then \eqref{eq.backadr} self-improves to give the estimate
\begin{equation}\label{eq.backadrbetter} 
c_1r^{n+1}\leq \sigma(\Delta_r^-\cap \{t<t_0-(ar)^2\})\,, 
\end{equation}
for some constants $a\in(0,1)$ and $c_1>0$, depending only on $n$ and the ADR and TBADR constants;
see \cite[Apprendix A]{GH} for the proof.
\end{remark}

\begin{definition}({\bf Parabolic BMO}). \label{bmodef}
$BMO(\Sigma)$ is the parabolic version of the usual BMO space with norm $||f||_{BMO(\Sigma)}$, 
defined for any locally integrable function $f$  on $\Sigma$ by
\begin{equation}
||f||_{BMO(\Sigma)}:=\underset{\Delta}\sup\Big\{ \displaystyle\tiltfiint_{\Delta} |f-f_{\Delta}|\,d\sigma\Big\}<\infty,
\end{equation}
where $\Delta = \Delta_r(x,t) :=Q_r(x,t)\cap\Sigma$, $ f_\Delta:= \tiltfiint_\Delta f$, $(x,t)\in\Sigma$, and $0<r<R_0$.
\end{definition}

\begin{definition}({\bf Parabolic Polar Coordinates}).\label{polar} Let 
$d\sigma_{\mathbb{S}^n}$ denote the usual surface measure on the unit sphere
$\mathbb{S}^n$ in $\ree$.
We have the parabolic polar coordinate decomposition 
\[
(X,t)=(\rho\zeta,\rho^2 \tau)\, ,\quad dXdt= \rho ^{n+1}d\rho \, 
d\mu(\zeta,\tau) , 
\]
where $(\zeta,\tau) \in \mathbb{S}^{n}$,  $\rho
= \|(X,t)\|$, and $\mu$ is an appropriately weighted version of surface measure on
the sphere; to be precise, $d\mu(\zeta,\tau):= \big(1+\tau^2\big)\, d\sigma_{\mathbb{S}^n}(\zeta,\tau)$;
see, e.g. \cite{FR1,FR2} or \cite{R}.

\end{definition}

\begin{definition} \label{projdef} ({\bf Parabolic Projection}).
We denote by $\pi_{\text{par}}(X,t)$ the parabolic projection of $(X,t)$ onto $\mathbb{S}^n$, 
which we define by 
setting $\pi_{\text{par}}(X,t)=(\zeta,\tau)$, where $(X,t)$ has the parabolic polar coordinate 
representation
\begin{align*}
    (X,t)=\left(\rho\zeta, \rho^2\tau\right),
\end{align*}
with $\rho=||(X,t)||$, and $(\zeta,\tau)\in \mathbb{S}^n$.
\end{definition}
\begin{definition} \label{conedef} ({\bf Parabolic Cone})  Let 
$(\zeta,\tau)\in \mathbb{S}^n$, and let $\vartheta>0$.  We define the
parabolic cone $\Gamma_{\vartheta}(\zeta,\tau)$, ``in the direction $(\zeta,\tau)$",
with vertex  at the origin and 
aperture $\vartheta>0$, as follows:
\begin{equation*}
\Gamma_{\vartheta}(\zeta,\tau):=\{(Y,s) : \|\pi_{\text{par}}(Y,s)-(\zeta,\tau)\|<\vartheta\}.
\end{equation*}
For any $(X,t)\in\ree$ with $\pi_{\text par}(X,t) =(\zeta,\tau)$, we shall also write
\[\Gamma_{\vartheta}(X,t):= \Gamma_{\vartheta}(\zeta,\tau)\,.\] 
\end{definition}

\begin{definition}\label{defAinfty}
({\bf $A_\infty$}, weak-$A_\infty$, and weak-$RH_q$). 
Given a parabolic ADR set $E\subset\ree$, 
and a surface cube
$\Delta_0:= Q_0 \cap E$,
we say that a Borel measure $\mu$ defined on $E$ belongs to
$A_\infty(\Delta_0)$ if there are positive constants $C$ and $\theta$
such that for each surface cube $\Delta = Q\cap E$, with $Q\subseteq Q_0$,
we have
\begin{equation}\label{eq1.ainfty}
\mu (F) \leq C \left(\frac{\sigma(F)}{\sigma(\Delta)}\right)^\theta\,\mu (\Delta)\,,
\qquad \mbox{for every Borel set } F\subset \Delta\,.
\end{equation}
Similarly, we say that $\mu \in$ weak-$A_\infty(\Delta_0)$ if 
for each surface cube $\Delta = Q\cap E$, with $2Q\subseteq Q_0$,
\begin{equation}\label{eq1.wainfty}
\mu (F) \leq C \left(\frac{\sigma(F)}{\sigma(\Delta)}\right)^\theta\,\mu (2\Delta)\,,
\qquad \mbox{for every Borel set } F\subset \Delta\,.
\end{equation}
We recall that, as is well known, the condition $\mu \in$ weak-$A_\infty(\Delta_0)$
is equivalent to the property that $\mu \ll \sigma$ in $\Delta_0$, and that for some $q>1$, the
Radon-Nikodym derivative $k:= d\mu/d\sigma$ satisfies
the weak reverse H\"older estimate 
\begin{equation}\label{eq1.wRH}
\left(\,\, \tiltfiint_\Delta k^q d\sigma \right)^{1/q} \,\leq C\, \tiltfiint_{2\Delta} k \,d\sigma\, 
\approx\,  \frac{\mu(2\Delta)}{\sigma(\Delta)}\,,
\quad \forall\, \Delta = Q\cap E,\,\, {\rm with} \,\, 2Q\subseteq Q_0\,.
\end{equation}
We shall refer to the inequality in \eqref{eq1.wRH} as
an  ``$RH_q$" estimate, and we shall say that $k\in RH_q(\Delta_0)$ if $k$ satisfies \eqref{eq1.wRH}.
\end{definition}

\section{Preliminaries}\label{s2}
The proofs of the following two lemmas 
may be found in the Appendix of \cite{GH}.

Let $a>0$ be the constant mentioned in Remark \ref{1.8}.  In the sequel, $\Omega$ will always denote
an open set in $\ree$, with quasi-lateral boundary $\Sigma$.
To simplify terminology, in the sequel we shall say that some quantity ``depends on
ADR" if it depends on the constants in the ADR and/or time backwards ADR conditions.
We recall that  $\hm^{X,t}$ may denote either caloric measure, 
or parabolic measure for
a divergence form parabolic operator  as in \eqref{eq1.1}-\eqref{eq1.1*}, 
but in the latter case we implicitly assume that the continuous
Dirichlet problem is solvable for $L$;  as mentioned above (see Remark \ref{r1.22}),
given our time-backwards ADR assumption, such solvability indeed holds for the heat equation,
and more generally for equations with $C^1$-Dini coefficients, by the result of
\cite{FGL}.  Recall that $R_0:=\diam(\Sigma)$.

\begin{lemma}[Parabolic Bourgain-type Estimate]\label{Bourgain} 
Let $\Sigma$ be time-backwards ADR on $ \Delta_r := Q_r(x_0,t_0) \cap \Sigma$,
where $(x_0,t_0)\in\Sigma$, 
and $0<r< \min\big(R_0,\sqrt{t_0-T_{min}}/\big(4\!\sqrt{n}\,\big)\big)$. 
Then there exists $M_1,\kappa>0$ such that for all $(X,t)\in Q_{\frac{a}{M_1}r}\cap \Omega$, 
\begin{equation}\label{eq2.3}
\omega^{X,t}(\Delta_{r})\geq \kappa\,,
\end{equation}
where $Q_{\frac{a}{M_1}r}:=Q\big((x_0,t_0),\frac{a}{M_1}r\big)$.  The constants
$M_1$ and $\kappa$ depend only on 
$n$, ADR and $\lambda$.
\end{lemma}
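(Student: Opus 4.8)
The plan is to prove this as the parabolic, possibly variable–coefficient, analogue of Bourgain's classical lower bound for harmonic measure, via the scheme: (i) a maximum–principle localization reducing the claim to an estimate in a fixed model cylinder; (ii) a lower bound, furnished by time–backwards ADR, for the parabolic (thermal) capacity of a piece of $\eo$ that lies uniformly in the temporal past of the poles; and (iii) a potential–theoretic step turning that capacity bound into the desired lower bound for $\omega^{X,t}$. Solvability of the continuous Dirichlet problem for $L$ is used only qualitatively, to guarantee that $\omega_L^{X,t}$, its Green function, and balayage measures are available; for $L=L_0$ this is classical, and for $C^1$–Dini coefficients it is the Wiener criterion of \cite{FGL} (cf. Remark \ref{r1.22}).

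For the localization, set $D:=\Omega\cap Q_{r/2}(x_0,t_0)$, and note that $\overline{Q_{r/2}(x_0,t_0)}\subset Q_r(x_0,t_0)$, while locally $\eo=\Sigma$ by Remark \ref{r-adr2}. Since $(X,t)\mapsto\omega^{X,t}(\Delta_r)$ is caloric in $D$, nonnegative, and equals $1$ on $\eo\cap\overline{Q_{r/2}(x_0,t_0)}\subset\Delta_r$, the maximum principle gives $\omega^{X,t}(\Delta_r)\geq\omega^{X,t}_{D}\!\big(\eo\cap\overline{Q_{r/2}(x_0,t_0)}\big)$ on $D$. A second maximum–principle comparison, this time between $D$ and the larger model domain $Q_{r/2}(x_0,t_0)\setminus F$ (which contains $D$, since the compact set $F\subset\eo$ chosen below is disjoint from the open set $\Omega$), yields $\omega^{X,t}_{D}\!\big(\eo\cap\overline{Q_{r/2}(x_0,t_0)}\big)\geq \omega^{X,t}_{Q_{r/2}(x_0,t_0)\setminus F}(F)$. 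Hence it suffices to bound $\omega^{X,t}_{Q_{r/2}(x_0,t_0)\setminus F}(F)$ below, uniformly for $(X,t)\in Q_{\frac{a}{M_1}r}(x_0,t_0)$, with $M_1$ to be chosen.

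To produce $F$, apply the self–improved backwards thickness estimate \eqref{eq.backadrbetter} on the surface cube $\Sigma\cap Q_{r/4}(x_0,t_0)$ and let $F$ be the closure of $\Delta^{-}_{r/4}(x_0,t_0)\cap\{t<t_0-(ar/4)^2\}$, so that $F\subset\eo\cap\overline{Q_{r/2}(x_0,t_0)}$, $\sigma(F)\gtrsim r^{\,n+1}$, and $F\subset\{t\le t_0-(ar/4)^2\}$. A Frostman/energy estimate for $\sigma|_F$ — dyadically decomposing the (Gaussian–type, via Aronson) fundamental solution $k_L$ and using only the ADR \emph{upper} bound $\sigma(\Delta_\rho)\lesssim\rho^{\,n+1}$ — gives $\iint k_L\,d(\sigma|_F)\,d(\sigma|_F)\lesssim r\,\sigma(F)$, so by the energy characterization of parabolic capacity, $\operatorname{Cap}_L(F)\gtrsim\sigma(F)^2/\big(r\,\sigma(F)\big)\gtrsim r^{\,n}$, with constant depending only on $n$, $\lambda$ and ADR. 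Then write $\omega^{X,t}_{Q_{r/2}(x_0,t_0)\setminus F}(F)=\iint_F G_{Q_{r/2}}(X,t;Y,s)\,d\nu_F(Y,s)$, with $G_{Q_{r/2}}$ the $L$–Green function of $Q_{r/2}(x_0,t_0)$ and $\nu_F$ the balayage of $F$ relative to $Q_{r/2}(x_0,t_0)$, of total mass $\approx\operatorname{Cap}_L(F)\gtrsim r^{\,n}$ ($F$ sitting well inside the cube). Choosing $M_1\ge 8$, every $(X,t)\in Q_{\frac{a}{M_1}r}(x_0,t_0)$ satisfies $t>t_0-(ar/4)^2\ge s$ for all $(Y,s)\in F$, with $t-s\approx r^2$ and $|X-Y|\lesssim r$; the interior lower bound for the Green function (Gaussian lower bound plus a Harnack chain in the fixed–scale cube) then gives $G_{Q_{r/2}}(X,t;Y,s)\gtrsim r^{-n}$, whence $\omega^{X,t}_{Q_{r/2}(x_0,t_0)\setminus F}(F)\gtrsim r^{-n}\cdot r^{\,n}=:\kappa>0$. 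Rescaling to unit size shows $M_1$ and $\kappa$ depend only on $n$, $\lambda$ and ADR.

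The crux — and the point at which backwards thickness, rather than plain ADR, is indispensable — is the coordination of the last two steps: $F$ must simultaneously carry a scale–invariant amount of parabolic capacity \emph{and} lie uniformly in the past of the poles, which is precisely why the poles must be confined to the small cube $Q_{\frac{a}{M_1}r}$ with $M_1$ large. The two supporting ingredients, namely the Frostman lower bound for parabolic capacity and the interior lower bound for the model–cube Green function, are standard once $k_L$ obeys Aronson's two–sided bounds and the continuous Dirichlet problem for $L$ is solvable; detailed arguments of this type are carried out in the appendix of \cite{GH} (see also \cite{EG}, \cite{GL}, \cite{FGL}).
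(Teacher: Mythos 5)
The paper itself does not prove this lemma in situ: the statement is quoted with its proof deferred to the appendix of \cite{GH}, where the argument is the parabolic version of the classical Bourgain barrier --- one forms the $L$-caloric potential of $\sigma$ restricted to a time-backwards portion of the surface cube (made substantial by TBADR via \eqref{eq.backadrbetter}), normalizes it using the upper ADR bound together with Gaussian/Aronson upper bounds, gets a lower bound at poles lying slightly forward in time from the Gaussian lower bound, and concludes by a maximum-principle comparison with $\omega^{X,t}(\Delta_r)$. Your proposal runs the same estimate through abstract potential theory instead: localization to the model domain $Q_{r/2}\setminus F$, a Frostman lower bound for the thermal capacity of $F$ using $\sigma|_F$, and then the capacitary/balayage representation plus an interior Green function lower bound in the cube. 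The hypotheses are deployed in the right places (TBADR, through \eqref{eq.backadrbetter}, is used precisely to put the mass of $F$ uniformly in the past of the poles, which is why $M_1$ must be large; upper ADR only enters the potential bound), and the bookkeeping $\operatorname{Cap}(F)\gtrsim r^{n}$, $G_{Q_{r/2}}\gtrsim r^{-n}$ is consistent, so the route is viable and genuinely different from the cited proof; what it buys is conceptual transparency (capacity in the past $\Rightarrow$ ample caloric measure), at the cost of heavier machinery.

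Three steps are stated more smoothly than they are currently justified. First, the ``energy characterization'' of parabolic capacity is not the right citation: $k_L(X,t;Y,s)$ vanishes unless $s<t$, so the symmetric-kernel inequality $\operatorname{Cap}\ge \mu(F)^2/(\text{energy})$ is not standard here; fortunately your own computation shows the potential of $\sigma|_F$ is $\lesssim r$ at \emph{every} point of $\ree$, so $\sigma|_F/(Cr)$ is admissible for the Evans--Gariepy/Watson definition of thermal capacity and yields $\operatorname{Cap}(F)\gtrsim r^{n}$ directly --- use that instead. Second, the identity $\omega^{X,t}_{Q_{r/2}\setminus F}(F)=\iint_F G_{Q_{r/2}}\,d\nu_F$ with $\nu_F(F)\approx \operatorname{Cap}_L(F)$ is classical for the heat equation (r\'eduite equals the Green potential of the capacitary measure, cf.\ \cite{D}, \cite{W2}), but for $C^1$-Dini variable coefficients it is not off-the-shelf at this level of generality and needs a reference or an argument; note also that with the correct non-symmetric definitions one obtains $\nu_F(F)=\operatorname{cap}_{G}(F)\ge \operatorname{cap}_{\Gamma}(F)$, which is the only direction you need, so it should be stated that way. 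Third, both maximum-principle comparisons need the boundary regularity supplied by TBADR (Remark \ref{r1.22}) and a word about the top face of $Q_{r/2}$ (not in the essential boundary) and the edge set $\eo\cap\partial Q_{r/2}$, where your first comparison's boundary inequality is not verified as written; this is repaired by comparing $\omega^{X,t}_{Q_{r/2}\setminus F}(F)$ directly with $(X,t)\mapsto\omega^{X,t}(\Delta_r)$ in $\Omega\cap Q_{r/2}$, since every point of $\eo\cap\overline{Q_{r/2}}$ lies in the relative interior of $\Delta_r$ and is regular. Alternatively, all of this machinery can be bypassed, as in \cite{GH}, by using the normalized potential of $\sigma$ restricted to a backwards piece at a suitably smaller scale $\epsilon r$ as the barrier itself.
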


\begin{remark}\label{2.4} One may readily deduce the following
consequence of Lemma \ref{Bourgain}.  Let $\Sigma$ be globally TBADR. 
Then there is a constant $M_2\approx_n M_1/a$,  such that, given
$(X,t)\in \Omega$, with $2M_2\delta_\infty(X,t) <\min\big(R_0,\sqrt{t-T_{min}}\,\big)\big)$,
if $(\hat{x},\hat{t})\in\Sigma$ is a touching point for $(X,t)$, so that 
$||(X,t)-(\hat{x},\hat{t})||_{\ell^\infty}=\delta_\infty(X,t)=:r$, and if
\begin{equation}\label{dxtdef}
\Delta_{X,t}:=\Delta\big((\hat{x},\hat{t}),M_2r\big)=\Sigma\cap Q\left((\hat{x},\hat{t}), M_2r\right)\,,
\end{equation} 
then 
\begin{equation}\label{2.5}
\omega^{X,t}(\Delta_{X,t})\geq \kappa.
\end{equation}
\end{remark}

\begin{lemma}[H{\"o}lder Continuity at the Boundary]\label{H}
Let $(x_0,t_0)\in \Sigma$, and fix $r$ with
$0<r< \min\big(R_0,\sqrt{t_0-T_{min}}/\big(8\!\sqrt{n}\,\big)\big)$.  
Suppose that 
$\Sigma$ is time-backwards ADR on $ \Delta_{2r} := Q_r(x_0,t_0) \cap \Sigma$.  Let $u$ be the
parabolic 
measure solution corresponding to non-negative data $f\in C_c(\eo)$, with 
$f\equiv 0$ on $\Delta_{2r}$. 
Then for some $\alpha>0$,
\begin{align*}
u(Y,t)\leq C\left(\dfrac{\delta(Y,t)}{r}\right)^{\alpha}\dfrac{1}{| Q_{2r}(x_0,t_0))|}\iint_{ Q_{2r}(x_0,t_0)\cap \Omega} u, \hspace{.1in} \forall (Y,t)\in Q_r(x_0,t_0)\cap \Omega, 
\end{align*}
where the constants $C$ and $\alpha$ depend only on $n,$ $\lambda$, and 
the ADR and time-backwards ADR constants.
\end{lemma}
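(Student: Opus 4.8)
The plan is to combine the Bourgain-type lower bound (Lemma \ref{Bourgain}) with the boundary maximum principle via an oscillation-decay / iteration argument, which is the classical route to boundary Hölder estimates for non-negative solutions vanishing on a piece of the boundary. First I would normalize: set $M := |Q_{2r}(x_0,t_0)|^{-1}\iint_{Q_{2r}(x_0,t_0)\cap\Omega} u$, and it suffices to prove the estimate with $\delta(Y,t)/r$ replaced by $\delta_\infty(Y,t)/r$, since the two are comparable. The key geometric input is that for every scale $\rho\le r$ and every surface cube $\Delta_\rho=Q_\rho(y_0,s_0)\cap\Sigma$ with $(y_0,s_0)\in\Delta_{2r}$, TBADR holds on $\Delta_\rho$ (this is the self-improvement noted after Definition \ref{defbackadr} and Remark \ref{1.8}), so Lemma \ref{Bourgain} is available at every such scale, giving $\omega^{X,t}(\Delta_\rho)\ge\kappa$ for all $(X,t)\in Q_{(a/M_1)\rho}(y_0,s_0)\cap\Omega$.

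Second, I would set up the dyadic iteration. Fix $(Y,t)\in Q_r(x_0,t_0)\cap\Omega$ and let $d:=\delta_\infty(Y,t)$. Choose a touching point $(\hat y,\hat t)\in\Sigma$ for $(Y,t)$, which lies on $\Sigma$ by the remark in Definition \ref{def-touch} since $\sqrt{t-T_{min}}>d$ (using the restriction on $r$). For a suitable large dilation factor $\Lambda$ (a fixed multiple of $M_1/a$) and $r_j:=\Lambda^{-j} r$, consider the surface cubes $\Delta^{(j)}:=Q_{r_j}(\hat y,\hat t)\cap\Sigma\subset\Delta_{2r}$ and the solid regions $\Omega_j:=Q_{r_j}(\hat y,\hat t)\cap\Omega$. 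On $\partial(\Omega_j)\setminus\Sigma$ — the ``interior'' part of the boundary of the box — one has $\omega^{X,t}_{\Omega_j}$-mass at least $\kappa$ carried by $\Delta^{(j-1)}$ for points $(X,t)$ deep inside $\Omega_j$, hence by the maximum principle applied in $\Omega_j$ (with data $0$ on $\Delta^{(j-1)}\supset\Delta^{(j)}$, which is where $f\equiv0$, and with $u$ bounded by $\sup_{\Omega_{j-1}}u$ elsewhere),
\[
\sup_{\Omega_j} u \;\le\; (1-\kappa)\,\sup_{\Omega_{j-1}} u\,.
\]
Iterating, $\sup_{\Omega_j}u\le (1-\kappa)^j \sup_{\Omega_0}u$ where $\Omega_0 = Q_r(x_0,t_0)\cap\Omega$; and a standard interior estimate (or the mean value / local boundedness for the parabolic equation, using that $u\ge0$ solves $Lu=0$) bounds $\sup_{\Omega_0}u\lesssim M$. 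Choosing $j$ so that $r_{j+1}\le d< r_j$, i.e. $j\approx \log(r/d)/\log\Lambda$, and using that $(Y,t)\in\Omega_j$ for that $j$ (since $d<r_j$ and $(Y,t)$ is within $\ell^\infty$-distance $d$ of $(\hat y,\hat t)$), yields $u(Y,t)\le (1-\kappa)^j M \le C (d/r)^\alpha M$ with $\alpha := \log(1/(1-\kappa))/\log\Lambda>0$, as desired.

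The main obstacle is making the one-step decay estimate $\sup_{\Omega_j}u\le(1-\kappa)\sup_{\Omega_{j-1}}u$ fully rigorous in this generality: one must be careful about which part of $\partial\Omega_j$ is ``parabolic-interior'' versus lies on $\Sigma$, that the Bourgain estimate is applied in the subdomain $\Omega_j$ (not $\Omega$) so one needs its conclusion for the $\Omega_j$-parabolic measure — which is fine, since the lower bound in Lemma \ref{Bourgain} is proved by comparison with a barrier/subsolution supported near $\Delta^{(j-1)}$ and hence is monotone under shrinking the domain — and that the ``deep inside'' region where the $\kappa$ lower bound holds still contains the relevant portion of $\Omega_j$ at the next scale, which is exactly what the ratio $\Lambda\gg M_1/a$ guarantees. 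A secondary technical point is the passage from the surface-cube TBADR hypothesis on $\Delta_{2r}$ to TBADR on all the smaller $\Delta^{(j)}$ with centers varying over $\Delta_{2r}$; this is the self-improvement in Remark \ref{1.8} together with the nesting property recorded in Definition \ref{defbackadr}. Everything else — the interior bound $\sup_{\Omega_0}u\lesssim M$, the comparability $\delta\approx\delta_\infty$, and the bookkeeping of constants — is routine.
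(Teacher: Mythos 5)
Note first that the paper does not prove Lemma \ref{H} in situ: it cites the Appendix of \cite{GH}. Your outline (Bourgain lower bound plus maximum principle, iterated over dyadically shrinking cubes centered at a touching point, with the exponent $\alpha$ coming from the geometric decay $(1-\kappa)^j$) is the expected classical route, but two of the steps you dismiss as routine are exactly where the real work lies, and as you have stated them they do not go through.

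First, your one-step decay $\sup_{\Omega_j}u\le(1-\kappa)\sup_{\Omega_{j-1}}u$ needs the Bourgain-type lower bound for the parabolic measure of the \emph{truncated} domain $\Omega\cap Q_{r_{j-1}}$, and you justify this by saying the estimate is ``monotone under shrinking the domain.'' That monotonicity goes the wrong way: if $\Omega'\subset\Omega$ and $F\subset\partial\Omega\cap\partial\Omega'$ is a common boundary portion, then $\omega^{X,t}_{\Omega'}(F)\le\omega^{X,t}_{\Omega}(F)$ (leaving $\Omega'$ through $F$ forces leaving $\Omega$ through $F$ at the same moment), which is the opposite of the lower bound you need. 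What actually saves the argument is that the capacitary/barrier proof of Lemma \ref{Bourgain} yields the estimate directly in the localized domain --- equivalently, one bounds $\omega^{X,t}_{\Omega\cap Q_\rho}\big(\Omega\cap\partial_p Q_\rho\big)$ from above by the corresponding quantity for $Q_\rho\setminus K$ with $K:=Q^-_{\rho/2}\setminus\Omega$, and then uses the thermal-capacity (TBADR) lower bound for $K$. You must invoke that localized statement; a general domain-monotonicity principle is false.

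Second, the closing bound $\sup_{Q_r(x_0,t_0)\cap\Omega}u\lesssim |Q_{2r}(x_0,t_0)|^{-1}\iint_{Q_{2r}(x_0,t_0)\cap\Omega}u$ is \emph{not} ``a standard interior estimate.'' For points $(Z,\tau)$ with $\delta(Z,\tau)\ll r$, Moser's local boundedness controls $u(Z,\tau)$ only by the average over a cylinder of size $\approx\delta(Z,\tau)$, and in the absence of any connectivity (no Harnack chains are assumed) there is no purely interior way to pass from that to the average over $Q_{2r}$ normalized by $|Q_{2r}|$; indeed, without the boundary vanishing the inequality is simply false (a thin component of $\Omega\cap Q_{2r}$ on which $u$ is a large constant). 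One must use $u=0$ on $\Delta_{2r}$ here as well: by TBADR and Remark \ref{r-adr2}, essentially all of $\partial\Omega\cap Q_{2r}$ lies on $\Sigma$, where $u$ vanishes continuously, so the extension of $u$ by zero to $Q_{2r}$ is a nonnegative weak subsolution, and Moser's estimate applied to this extension in $Q_{2r}$ gives the sup bound with the stated normalization. With these two repairs (plus the bookkeeping ensuring the iteration cubes stay inside $Q_{2r}$ and their scales remain admissible for TBADR and for Lemma \ref{Bourgain}), your scheme does give the lemma.
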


\section{Proof of Theorem \ref{tmain}}\label{s3}

Recall that for
$(X,t)\in \Omega$, we let $\delta_\infty(X,t):=\text{dist}_\infty((X,t), \eo)$ denote the 
parabolic $\ell^\infty$ distance to the essential boundary, and
that if $\sqrt{t-T_{min}} > \delta_\infty(X,t)$, then 
\begin{equation}\label{eq3.1}
\delta_\infty(X,t)=\text{dist}_\infty((X,t),\Sigma)\,,
\end{equation}
by definition of $\Sigma$.  We note that in the context of Theorem \ref{tmain}, by hypothesis
we shall always
work with points $(X,t)$ for which \eqref{eq3.1} holds.



Given $(X,t)\in\Omega$, let  $(\hat{x},\hat{t})\in \Sigma$ be a touching point for $(X,t)$, so that
\begin{equation}\label{eq3.4}
r:=\delta_\infty(X,t)=||(X,t)-(\hat{x},\hat{t})||_{\ell^\infty}\,,
\end{equation}
 and define $\Delta_{X,t}$ as in \eqref{dxtdef}
where $M_2$ is the constant in Remark \ref{2.4}.
We shall say that caloric (or parabolic) measure $\omega^{X,t}$ is {\em locally ample} on $\Delta_{X,t}$,
or more precisely,
{\em $(\theta,\beta)$-locally ample},  if
there exists  constants $\theta,\,\beta\in (0,1)$ such that 
\begin{align}\label{eq1.4}
\sigma(F)\geq 
(1-\theta)\sigma(\Delta_{X,t})\,\,\implies\,\,
\omega^{X,t}(F)= \omega_L^{X,t}(F)\geq\beta\,,
\end{align}
where $F\subset \Delta_{X,t}$ is a Borel set.

We shall use the following result from \cite{GH}; we remark that it is the parabolic analogue of a result proved
in the elliptic setting in \cite{BL}.
\begin{theorem}\cite[Theorem 1.6]{GH}.
\label{lemma} Let $\Omega\subset \ree$ be 
an open set with a globally ADR
quasi-lateral boundary $\Sigma$. 
Let $(x_0,t_0)\in\Sigma$, and let $0<r<\sqrt{t_0-T_{min}}/(8\!\sqrt{n})$. 
Assume that 
$\Sigma$ is time-backwards ADR 
on $ \Delta_{2r} =\Sigma \cap Q_{2r}(x_0,t_0)$, 
and suppose that there are constants $\theta,\,\beta\in (0,1)$ such that
caloric measure $\omega^{X,t}$ satisfies the $(\theta,\beta)$-local ampleness condition 
\eqref{eq1.4} on $\Delta_{X,t}$ for each
$(X,t) \in \Omega \cap Q_{2r}(x_0,t_0)$. 

Then there exist constants $C\geq 1$, $\gamma >0$,  
such that if $(Y_0,s_0) \in \Omega \setminus Q_{4r}(x_0,t_0)$,
then $\omega^{Y_0,s_0} \ll\sigma$ on $\Sigma \cap Q_r(x_0,t_0)$, with
$d\omega^{Y_0,s_0}/d\sigma=h$ satisfying
\begin{multline}\label{RH}
\left(\rho^{-n-1}\iint_{\Delta_{\rho}(y,s)}h^{1+\gamma}d\sigma\right)^{1/(1+\gamma)} \leq 
C\rho^{-n-1}
\iint_{\Delta_{2\rho}(y,s)}h\, d\sigma 
\\=C\rho^{-n-1}\omega^{Y_0,s_0}\left(\Delta_{2\rho}(y,s)\right),
\end{multline}
whenever $(y,s)\in\Sigma$ and $Q_{2\rho}(y,s)\subset Q_{r}(x_0,t_0)$, where 
$\Delta_{\rho}(y,s)=Q_{\rho}(y,s)\cap \Sigma$, and 
$\Delta_{2\rho}(y,s)=Q_{2\rho}(y,s)\cap \Sigma$.
\end{theorem}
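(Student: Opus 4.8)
The plan is to carry out, in the parabolic setting, the argument of Bennewitz--Lewis \cite{BL} (compare its elliptic incarnation in \cite{HLe}): one first upgrades the pointwise ampleness hypothesis into a quantitative, scale-invariant absolute continuity statement for the fixed far pole $(Y_0,s_0)$, and then passes to the weak reverse H\"older inequality \eqref{RH} by a standard self-improvement. Concretely, it suffices to produce constants $\tau_0,\eta_0\in(0,1)$, depending only on $n$, $\lambda$, and the ADR and TBADR constants, such that
\[
\sigma(E)\le\tau_0\,\sigma\big(\Delta_\rho(y,s)\big)\ \Longrightarrow\ \omega^{Y_0,s_0}(E)\ \le\ \eta_0\,\omega^{Y_0,s_0}\big(\Delta_{2\rho}(y,s)\big)
\]
for every Borel set $E\subset\Delta_\rho(y,s)$ and every admissible surface cube with $Q_{2\rho}(y,s)\subset Q_r(x_0,t_0)$. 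Indeed, a Calder\'on--Zygmund decomposition of small subsets of $\Sigma$, iterated over dyadic generations, turns this implication into a power bound $\omega^{Y_0,s_0}(E)\le C\big(\sigma(E)/\sigma(\Delta_{2\rho})\big)^{\theta}\omega^{Y_0,s_0}(\Delta_{2\rho})$ for some $\theta>0$, i.e. the weak-$A_\infty$ property on $\Sigma\cap Q_r(x_0,t_0)$, which by the equivalence recalled in Definition \ref{defAinfty} is exactly \eqref{RH}; in particular $\omega^{Y_0,s_0}\ll\sigma$ on $\Sigma\cap Q_r(x_0,t_0)$ comes along for free. This reduction and the self-improving iteration are by now routine (cf. \cite{HLe,GH}), so the real content is the displayed implication.

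The engine for that implication is the following use of ampleness. Fix $E\subset\Delta':=\Delta_\rho(y,s)$ with $\sigma(E)\le\tau_0\sigma(\Delta')$, write $2\Delta':=\Delta_{2\rho}(y,s)$, and note $\omega^Z(2\Delta')=\omega^Z(E)+\omega^Z(2\Delta'\setminus E)$ for every $Z\in\Omega$. Let $\beta,\theta\in(0,1)$ be the ampleness constants of \eqref{eq1.4}, and call $Z=(X,t)\in\Omega$ a \emph{good pole} if its ampleness cube $\Delta_{X,t}$ of \eqref{dxtdef} satisfies $\Delta_{X,t}\subseteq 2\Delta'$; by ADR this forces $\sigma(\Delta_{X,t})\approx\delta_\infty(Z)^{\,n+1}$, so as soon as $\delta_\infty(Z)^{\,n+1}\ge\theta^{-1}\tau_0\rho^{\,n+1}$ we also have $\sigma(E\cap\Delta_{X,t})\le\sigma(E)\le\tau_0\rho^{\,n+1}\le\theta\,\sigma(\Delta_{X,t})$, whence \eqref{eq1.4} applied at $Z$ to $F=\Delta_{X,t}\setminus E$ gives $\omega^Z(2\Delta'\setminus E)\ge\omega^Z(\Delta_{X,t}\setminus E)\ge\beta$, and Remark \ref{2.4} gives $\omega^Z(2\Delta')\ge\omega^Z(\Delta_{X,t})\ge\kappa$. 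Using the partition above and $\omega^Z(E)\le1$,
\[
\frac{\omega^Z(E)}{\omega^Z(2\Delta')}\ =\ 1-\frac{\omega^Z(2\Delta'\setminus E)}{\omega^Z(E)+\omega^Z(2\Delta'\setminus E)}\ \le\ \frac{\omega^Z(E)}{\omega^Z(E)+\beta}\ \le\ \frac{1}{1+\beta}\ =:\ \eta_1\ <\ 1 .
\]
So at every good pole one already has $\omega^Z(E)\le\eta_1\,\omega^Z(2\Delta')$, and good poles occupy a ``band'' at distance $\delta_\infty(Z)\approx\rho$ from $\Sigma$, with touching point near $\Delta'$.

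To descend from good poles to the far pole we localize. Since $(Y_0,s_0)\in\Omega\setminus Q_{4r}(x_0,t_0)$ while $E\subset\Delta'\subset Q_r(x_0,t_0)$, and since caloric measure is carried on times $\le s_0$, the set $E$ is shielded from $(Y_0,s_0)$ by $\partial Q_{2r}(x_0,t_0)$; writing $\Gamma:=\partial Q_{2r}(x_0,t_0)\cap\Omega$ and letting $\mu$ be the caloric measure of $\Omega\setminus\overline{Q_{2r}(x_0,t_0)}$ with pole $(Y_0,s_0)$, the maximum principle gives
\[
\omega^{Y_0,s_0}(E)=\iint_{\Gamma}\omega^{Z'}(E)\,d\mu(Z'),\qquad \omega^{Y_0,s_0}(2\Delta')=\iint_{\Gamma}\omega^{Z'}(2\Delta')\,d\mu(Z').
\]
Were every $Z'\in\Gamma$ a good pole, the displayed implication with $\eta_0=\eta_1$ would follow immediately. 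The obstruction is that $\Gamma$ inevitably also meets \emph{bad} poles --- points $Z'$ much closer to $\Sigma$ than $\rho$, or whose touching point lies far from $\Delta'$, so that $\Delta_{Z'}$ fails to straddle $2\Delta'$ --- at which one only has the trivial $\omega^{Z'}(E)\le\omega^{Z'}(2\Delta')$.

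To recover a net gain one must show that the bad poles carry only a bounded fraction of $\iint_\Gamma\omega^{Z'}(2\Delta')\,d\mu$. This is done by iterating the localization inward: one re-shields the bad part against surfaces nearer to $2\Delta'$ at progressively smaller scales, uses the boundary H\"older estimate Lemma \ref{H} to bound $\omega^{Z'}(E)$ at the bad poles, and again Lemma \ref{Bourgain}/Remark \ref{2.4} to keep $\omega^{Z'}(2\Delta')$ bounded below on the good part, until only a controlled remainder survives --- precisely the bookkeeping carried out in the elliptic case in \cite{BL,HLe}. The conceptual difficulty throughout is the complete absence of any connectivity (Harnack-chain) hypothesis, which rules out the usual boundary comparison principle / change of pole that would otherwise make this descent routine; it is circumvented, exactly as in the elliptic setting, by exploiting that ampleness is assumed at \emph{every} interior point together with repeated use of the maximum principle. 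The only genuinely parabolic additions are the forward-in-time bookkeeping in the shielding steps and the systematic use of the backwards-in-time thickness of $\Sigma$ (built into the TBADR hypothesis) to keep Lemmas \ref{Bourgain} and \ref{H} available on all the surface cubes that arise. I expect this last synthesis --- controlling the mass contributed by the bad poles on the shielding surfaces --- to be the main obstacle.
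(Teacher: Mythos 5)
First, some context: this theorem is not proved in the present paper at all — it is imported from \cite[Theorem 1.6]{GH}, and the paper only remarks that it is the parabolic analogue of the elliptic result of Bennewitz--Lewis \cite{BL}. Your outline correctly identifies that strategy (reduce \eqref{RH} to a ``small sets have relatively small caloric measure'' implication, self-improve via Calder\'on--Zygmund, and then run a Bennewitz--Lewis-type maximum principle argument exploiting ampleness at interior points together with Lemma \ref{Bourgain}/Remark \ref{2.4} and Lemma \ref{H}), and your ``good pole'' computation is fine. As a roadmap, this matches the approach of \cite{GH}.

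As a proof, however, there is a genuine gap, and you name it yourself: the entire content of the theorem is the control of the ``bad poles'' — the iteration that substitutes for the change-of-pole/boundary comparison principle that is unavailable without any Harnack chain hypothesis — and this is exactly the step you leave to analogy with \cite{BL,HLe} rather than carry out; the parabolic adaptations you flag (one-directional time, placement of the backwards-in-time regions needed to invoke Lemma \ref{Bourgain}) live precisely there. Moreover, the one concrete localization step you do write down is not correct as stated: the identity $\omega^{Y_0,s_0}(E)=\iint_\Gamma\omega^{Z'}(E)\,d\mu(Z')$, with $\mu$ the caloric measure of $\Omega\setminus\overline{Q_{2r}(x_0,t_0)}$, is unjustified. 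The maximum principle gives only one-sided comparisons, and to discard the contribution of the portion of $\partial_e\Omega$ lying outside $Q_{2r}$ one would need $\omega^{Z}(E)$ to vanish at boundary points away from $E$, a regularity/Fatou-type input that the hypotheses furnish only where TBADR is assumed, i.e.\ on $\Delta_{2r}$. In addition, when $\rho\ll r$ no point of $\partial Q_{2r}(x_0,t_0)\cap\Omega$ is a good pole for $\Delta_\rho(y,s)$ (good poles lie within distance $\approx M_2\rho$ of that surface cube), so the descent to the far pole cannot be launched from the fixed shield $\partial Q_{2r}$; it must be organized at scale $\rho$ inside $Q_{2\rho}(y,s)$ and iterated over scales there, which is the bookkeeping actually carried out in \cite{GH}. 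In short: correct identification of the strategy, but the crux is asserted by analogy rather than proved, and the localization formula as written would fail.
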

\begin{remark}
In \cite{GH}, $\Delta_{X,t}$ is defined in a slightly different way: 
there, $\Delta_{X,t}$ is centered at $(X,t)\in \Omega$; more precisely, it is
of the form $\Sigma \cap Q\big((X,t),K\delta(X,t)\big)$, for some $K\geq 2$.
This is comparable to the present definition of $\Delta_{X,t}$ in 
Remark \ref{2.4}. 
\end{remark}

Thus, to prove Theorem \ref{tmain}, we suppose that
 $\Sigma$ is globally ADR and TBADR, and observe that 
 it suffices to verify the hypotheses of Theorem \ref{lemma}, 
in the presence of BMO-solvability. More precisely, we suppose 
that estimate \eqref{1.3} holds for all 
$f\in C_c(\Sigma\cap\{T_{min}<t<T_{max}\})$, and 
our goal is to verify the 
$(\theta,\beta)$-locally ampleness condition \eqref{eq1.4}, for all $(X,t)\in \Omega$ with
$2M_2\delta_\infty(X,t) <\min\big(R_0,\sqrt{t-T_{min}}\,\big)\big)$, where $M_2$ is the constant in Remark \ref{2.4}.  
In comparing this constraint on $\delta_\infty(X,t)$ with that on $r$ in Theorem \ref{lemma}, we observe that
there is no loss of generality: indeed,
for a fixed large constant $M$,  we may cover a given surface cube
$\Delta_r(x_0,t_0)$ by surface cubes of scale $r/M$; it then suffices to verify 
the Reverse H\"older inequality
\eqref{RH} on these smaller cubes.

We now fix $(X,t)$ as above, and let $(\hat{x},\hat{t})\in \Sigma$ be a touching point for $(X,t)$, so that 
\eqref{eq3.4} holds.  
Fix a sufficiently small number 
$b\in (0,\pi/10,000)$, to be chosen depending only on $n$ and ADR. 
We then set 
$$Q_{X,t}:=Q((\hat{x},\hat{t}),M_2r), \quad \Delta_{X,t}:=\Delta((\hat{x},\hat{t}),M_2r),$$
$$Q_{X,t}':=Q((\hat{x},\hat{t}),br), \quad \Delta_{X,t}':=\Delta((\hat{x},\hat{t}),br).$$
Note that  $\Delta_{X,t}$ is the same as in \eqref{dxtdef}.  

The proof will use the following pair of claims.  We recall that
$a$ is the
constant in Remark \ref{1.8}. 

\medskip

\noindent\textbf{Claim 1}: For $b$ small enough, depending on $n, a$ and ADR, there is a 
constant $\beta>0$ depending only on $n,a,b$, ADR and $\lambda$, and a cube 
$Q_1:=Q\big((x_1,t_1),br\big)\subset Q_{X,t}$, with $(x_1,t_1)\in \Sigma$, 
such that 
\begin{equation}\label{sep}
\dist(Q_{X,t}',Q_1)\gtrsim_a r
\end{equation}
(note that the implicit constants in \eqref{sep} depend on the constant $a$ in
Remark \ref{r1.22}, but {\em not} on $b$), and
\begin{equation}\label{3.3}
\omega^{X,t}(\Delta_1)\geq\beta\omega^{X,t}(\Delta_{X,t}),
\end{equation}
where $\Delta_1:=Q_1\cap \Sigma$.

\begin{remark} Since the constant $a$ in Remark \ref{1.8}
depends only on $n$ and ADR, in turn $b$ ultimately
depends only on $n$ and ADR.
\end{remark}

\smallskip

\noindent
\textbf{Claim 2}: Suppose that $u$ is a non-negative solution of $Lu=0$ in $\Omega$, vanishing continuously on $2\Delta_{X,t}'$, with $||u||_{L^{\infty}(\Omega)}\leq 1$. Then for every $\epsilon>0$,
\begin{equation}\label{3.4}
u^2(X,t)
\leq\dfrac{ C_{\epsilon}}{\sigma(\Delta_{X,t})}\displaystyle\iint_{Q_{X,t}\cap \Omega}\!
\left( |\nabla u(Y,s)|^2+|\delta(Y,s)\partial_s u(Y,s)|^2\right)\delta(Y,s)dYds+C\epsilon^{2\alpha},
\end{equation}
where $\alpha$ is the exponent from Lemma \ref{H}.

Momentarily taking these two claims for granted, we adapt to the parabolic setting 
the argument of \cite{DKP}, as modified in
\cite{HLe}. 
Let $Q_1$ and $\Delta_1$ be as in Claim 1. Let $F\subset \Delta_{X,t}$ be a Borel set satisfying
\begin{align*}
\sigma(F)\geq (1-\eta)\sigma(\Delta_{X,t}),
\end{align*}
for some small $\eta>0$. If we choose $\eta$ small enough, depending only on $n$, ADR, and $b$, then by inner regularity of $\sigma$, there is a closed set $F_1\subset F\cap \Delta_1$ such that
\begin{align*}
\sigma(F_1)\geq (1-\sqrt{\eta})\sigma(\Delta_1).
\end{align*}
Set $A_1:=\Delta_1\setminus F_1$. Then $A_1$ is relatively open in $\Sigma$. Define 
\begin{align*}
f:=\max(0,1+\gamma \log \mathcal{M}(1_{A_1})),
\end{align*}
where $\gamma>0$ is a small number, to be chosen, and $\mathcal{M}$ is the Hardy-Littlewood maximal operator on $\Sigma$.
Note that we have the following:
\begin{align}\label{est}
0\leq f\leq 1, \qquad ||f||_{BMO(\Sigma)}\leq C\gamma ,\qquad 1_{A_1}(\bx)\leq f(\bx),\, \,\forall \bx\in\Sigma.
\end{align}
Note also that if ${\bf z}\in \Sigma\setminus 2Q_1$, then
\begin{center}
$\mathcal{M}(1_{A_1})({\bf z})\lesssim \dfrac{\sigma(A_1)}{\sigma(\Delta_1)}\lesssim \sqrt{\eta},$
\end{center}
where the implicit constants depend only on $n$ and ADR. Thus, if $\eta$ is chosen small enough depending on $\gamma$, then $1+\gamma\log\mathcal{M}(1_{A_1})$ will be negative, hence $f\equiv 0$, on $\Sigma\setminus 2Q_1$.

In order to work with continuous data, we shall require the following.
\begin{lemma}\label{3.5} There exists a collection of continuous functions $\{f_\nu\}_{0<\nu<ar/1000},$ 
defined on $\Sigma$ with the following properties.
\begin{enumerate}
\item $0\leq f_\nu\leq 1$, for each $\nu$.
\item supp$(f_\nu)\subset 3Q_1\cap \Sigma.$
\item $1_{A_1}(\bx)\leq \underset{\nu\to 0}\liminf f_\nu(\bx)$, for every $\bx\in \Sigma$.
\item $\underset{\nu}\sup\|f_\nu\|_{BMO(\Sigma)}\leq C\|f\|_{BMO(\Sigma)}\lesssim \gamma,$ 
where $C=C(n,\text{ADR})$.
\end{enumerate}
\end{lemma}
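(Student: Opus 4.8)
The plan is to regularize the function $f := \max(0, 1+\gamma\log\mathcal{M}(1_{A_1}))$ by a standard parabolic mollification on $\Sigma$, while being careful about two things: the regularization must respect the support constraint (condition (2)), and it must recover $1_{A_1}$ in the $\liminf$ (condition (3)) even though $A_1$ is only relatively open. First I would recall that $f$ is already bounded between $0$ and $1$, has $\|f\|_{BMO(\Sigma)}\lesssim\gamma$, dominates $1_{A_1}$ pointwise, and vanishes identically on $\Sigma\setminus 2Q_1$ (this was established just above the lemma). So $f$ is supported in $2Q_1\cap\Sigma$, which sits comfortably inside $3Q_1\cap\Sigma$ with a buffer of size $\gtrsim r$ (here $r=\delta_\infty(X,t)$, and recall $Q_1$ has parabolic length $br$). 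The mollification parameter $\nu$ will be taken much smaller than this buffer, namely $0<\nu<ar/1000$, so that mollifying $f$ at scale $\nu$ cannot push mass out of $3Q_1$.

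Concretely, I would fix a nonnegative $C^\infty_c$ bump $\phi$ on $\ree$, supported in the unit parabolic cube, with $\iint\phi=1$, set $\phi_\nu(\bx):=\nu^{-(n+1)}\phi(\bx/\nu)$ (parabolic scaling), and define $f_\nu := (f * \phi_\nu)\big|_\Sigma$ — or, more properly on the ADR set $\Sigma$, an averaged version $f_\nu(\bx) := \tiltfiint_{\Delta_\nu(\bx)} f\,d\sigma$ smoothed further if genuine continuity (rather than mere Lipschitz-type regularity) is needed, but in any case a standard mollification adapted to the space-time metric on $\Sigma$. Properties (1) and (2) are then immediate: $0\le f_\nu\le 1$ because $f$ is, and averaging preserves this; and $\supp f_\nu \subset (\supp f)_\nu \subset (2Q_1)_\nu \subset 3Q_1$ for $\nu$ small, since the $\nu$-neighborhood of $2Q_1$ is contained in $3Q_1$ once $\nu \ll br$ (and $b$ depends only on $n$ and ADR, so $ar/1000$ is an admissible threshold). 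Property (4), the BMO bound, follows because mollification (equivalently, averaging over controlled families of surface cubes) is bounded on $BMO(\Sigma)$ with a dimensional/ADR constant — this is the parabolic analogue of the classical fact that convolution with an $L^1$ kernel of mass one does not increase the BMO norm by more than a constant; the ADR property of $\Sigma$ is what makes the averaging operator behave well here.

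The one genuinely delicate point is property (3). Since $A_1$ is relatively open in $\Sigma$, for each $\bx\in A_1$ there is a small surface ball $\Delta_\rho(\bx)\subset A_1$; for $\nu<\rho$ the average $f_\nu(\bx)$ sees only values of $f\ge 1_{A_1} = 1$ on that ball, so $f_\nu(\bx)\to 1$, giving $\liminf_{\nu\to0} f_\nu(\bx) \ge 1 = 1_{A_1}(\bx)$. For $\bx\notin A_1$ we have $1_{A_1}(\bx)=0\le f_\nu(\bx)$ trivially. Hence $1_{A_1}(\bx)\le\liminf_{\nu\to0}f_\nu(\bx)$ for every $\bx\in\Sigma$, which is exactly (3). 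I expect this last step — ensuring the $\liminf$ recovers the full indicator at every point of the open set $A_1$, using only that $A_1$ is relatively open and $\Sigma$ is ADR (so small surface cubes centered in $A_1$ are eventually contained in $A_1$) — to be the main thing to get right, though it is ultimately routine. The boundedness of the averaging operator on parabolic $BMO(\Sigma)$ in (4) is the other place where the ADR hypothesis is essential, but this too is standard.
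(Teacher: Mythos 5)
Your proposal follows essentially the same route as the paper: you mollify $f$ with a parabolically scaled kernel on $\Sigma$, normalized pointwise so that $\iint_\Sigma\Lambda_\nu(\bx,\bz)\,d\sigma(\bz)\equiv1$ (the paper takes $\Lambda_\nu(\bx,\bz)=b(\bx,\nu)^{-1}\zeta\big((\bx-\bz)/\nu^{\alpha}\big)$ with $b(\bx,\nu)\approx\nu^{n+1}$ by ADR, which is also what gives exact bounds $0\le f_\nu\le1$ and continuity, rather than restricting a Euclidean convolution), and you get (1)--(2) from the normalization, the support of $f$, and the smallness of $\nu$, and (3) from the relative openness of $A_1$, just as the paper does. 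The only substantive shortcut is that you assert the uniform $BMO(\Sigma)$ bound (4) as standard; since $\Sigma$ has no translation structure, the Euclidean ``convolution with a mass-one kernel preserves BMO'' argument does not apply verbatim, and the paper verifies (4) by a short two-case comparison (for $\nu\ge r$ compare $f_\nu$ on $\Delta$ with the mean of $f$ over $\Delta(\bx,2\nu)$; for $\nu<r$ compare with the mean over $2\Delta$ via Fubini), using ADR and the exact normalization of $\Lambda_\nu$ --- routine, but it is the step that actually carries the lemma.
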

We defer the proof of Lemma \ref{3.5} to the end of this section.

Taking the two claims (and Lemma \ref{3.5})
for granted momentarily, we give the proof of Theorem \ref{tmain}.  As noted above, by Theorem \ref{lemma},
it suffices to verify the $(\theta,\beta)$-locally ampleness condition \eqref{eq1.4}.  To this end,
let $u_\nu$ be the solution of the continuous Dirichlet problem with data $f_\nu$. 
Then $f_\nu$ vanishes on $2\Delta_{X,t}'$, by the separation condition \eqref{sep} 
in Claim 1 and Lemma \ref{3.5}-(2), provided that $b$ is chosen small enough depending on $a$. Then, for small $\epsilon>0$ to be chosen momentarily, by Lemma \ref{3.5}, Fatou's lemma, and Claim 2, we have
\begin{equation}\label{3.6}
\omega^{X,t}(A_1)\leq \int_{\Sigma} \underset{\nu\to 0}\liminf f_\nu \,d\omega^{X,t} \leq \underset{\nu\to 0}\liminf \,u_\nu(X,t) \leq C_{\epsilon}\gamma + C\epsilon^{\alpha},
\end{equation}
where in the last inequality we used \eqref{3.4}, \eqref{1.3}, and Lemma \ref{3.5}-(4).
Combining \eqref{3.6} with \eqref{2.5}, we find that
\begin{equation}\label{3.7}
\omega^{X,t}(A_1) \leq (C_{\epsilon}\gamma + C\epsilon^{\alpha})\omega^{X,t}(\Delta_{X,t}).
\end{equation}

Next, we set $A:=\Delta_{X,t}\setminus F$, and observe that by definition of $A$ and $A_1$, along with Claim 1, and \eqref{3.7},
\begin{center}
$\omega^{X,t}(A)\leq \omega^{X,t}(\Delta_{X,t}\setminus \Delta_1)+\omega^{X,t}(A_1)\leq (1-\beta+C_{\epsilon}\gamma + C\epsilon^{\alpha})\omega^{X,t}(\Delta_{X,t}).$
\end{center}
We now choose first $\epsilon >0$, and then $\gamma>0$, so that $C_{\epsilon}\gamma + C\epsilon^{\alpha}<\beta/2$, to obtain that
\begin{center}
$\omega^{X,t}(F)\geq \dfrac{\beta}{2}\omega^{X,t}(\Delta_{X,t})\geq c\beta,$
\end{center}
where in the last inequality we have used \eqref{2.5}. Therefore \eqref{eq1.4} holds.

It remains to prove the two claims. Let $a>0$ be the constant mentioned in Remark \ref{1.8}. Recall 
that $M_1$ is the constant in Lemma \ref{Bourgain}, and that $M_2$ is the constant in Remark \ref{2.4}.

\begin{proof}[Proof of Claim 1]  Recall that we have fixed
$(X,t)\in \Omega$, and that $(\hat{x},\hat{t})\in \Sigma$ is a touching point for $(X,t)$, so that
$(\hat{x},\hat{t})$ lies on the boundary of 
the (open) cube $Q_r(X,t)$, with $r=\delta_\infty(X,t) = ||(X,t)-(\hat{x},\hat{t})||_{\ell^\infty}$, and
 $Q_r(X,t) \cap \Sigma = \emptyset$. If there is more than one touching point, we simply fix one.
 Note that since $(\hat{x},\hat{t})\in\partial Q_r(X,t)$, we have in particular that
 \[ \hat{t} \leq t+ r^2\,.\]
Consequently, we may apply Remark \ref{1.8} to the cube
$Q_{big}:= Q_{2a^{-1} r}(\hat{x},\hat{t})$, to find a point
$(y,s) \in \Sigma \cap Q_{big}$, with $s< \hat{t} - (2r)^2\leq t +r^2 -(2r)^2$.
The point $(y,s)$ therefore satisfies
\begin{equation}\label{eq3.14}
s< t-3r^2\,,\,\, \text{and}\,\, \, \|(X,t) - (y,s)\| \lesssim_a r\,.
\end{equation}

Let us note for future reference that for
$(Z,\tau)\in \Omega \cap Q_{big}$, 
by Remark \ref{r-adr2} we have 
\begin{equation}\label{eqdist2}
\dist_\infty\big((Z,\tau),\Sigma\big)= \delta_\infty(Z,\tau) 
\approx_a \dist_\infty\big((Z,\tau),\po\big)\,, \,\, \text{if }\, \tau<t-(r/4)^2\,,
\end{equation}
since $(X,t) \in \Omega$ implies that $t < T_{max}$, and the restriction
$\sqrt{t-T_{min}}>2M_2 r$, with $M_2 \approx M_1/a\gg 1/a$, 
implies that \eqref{eq3.1} holds for $(Z,\tau)\in Q_{big}$.

We fix a point ${\bf X}_*=(X_*,t_*)$ lying on the back face of $Q_r(X,t)$ (so that $t_* = t-r^2$),
with 
\begin{equation}\label{eq3.15}
|X_* -\hat{x}| \geq r/4\,. 
\end{equation}
We now form the parabola $\p_1$ with vertex at $(y,s)$, passing through the point $(X_*,t_*)$, 
so that any point 
$(Z,\tau)$ on $\p_1$ satisfies 
\[\tau-s = \,\frac{t_* -s}{|X_*-y|^2} \,|Z-y|^2\, \gtrsim_a\, |Z-y|^2\,.\]
We also form the parabola $\p_2$, with vertex at $(X_*,t_*)$, through the point $(X,t))$, so that any
point $(Z,\tau)$ on $\p_2$ satisfies 
\[\tau-t_* = \,\frac{t-t_* }{|X-X_*|^2} \,|Z-X_*|^2\, \gtrsim\, |Z-X_*|^2\]
(it may be that $X_*=X$, in which case $\p_2$ is simply the horizontal line joining
$(X,t-r^2)$ to $(X,t)$).
Set  $\C:= \p_1\cup\p_2$, and travel along $\C$ backwards in time, starting at $(X,t)$, moving towards 
$(X_*,t_*)$, and if need be through $(X_*,t_*)$ towards $(y,s)$,  stopping the 
first time that we reach a point $(Z_1,\tau_1)$ satisfying
\[\delta_\infty(Z_1,\tau_1) \, = \, b M_2^{-1} r\,.\]
Choose $(x_1,t_1)\in \Sigma$ such that $\delta_\infty(Z_1,\tau_1)= \|(Z_1,\tau_1) -(x_1,t_1)\|_{\ell^\infty}$,
set $\Delta_1:= Q_1\cap\Sigma$, with $Q_1:= Q\big((x_1,t_1), br\big)$,  
so that, by Remark \ref{2.4}, 
\[\hm^{Z_1,\tau_1}(\Delta_1)\geq \kappa\,.\]
We may then move along $\C$, forwards in time, from $(Z_1,\tau_1)$ to $(X,t)$, to
obtain \eqref{3.3} by Harnack's inequality and
\eqref{eqdist2}, and the fact that $\hm^{X,t}$ is a probability measure.

Moreover, by \eqref{eq3.15} and the construction of the curve $\C$, for $b$ small enough  
depending on $a$, 
we readily obtain the separation condition \eqref{sep}, 
and for $M_2$ large enough, again depending on $a$, using the second inequality in \eqref{eq3.14},
we obtain
the containment $Q_1\subset Q_{X,t}$. 
\end{proof}

\begin{proof}[Proof of Claim 2]

\noindent By a translation, we may suppose that the touching point $(\hat{x},\hat{t})$ is the origin.
As above, we set 
\[r := \delta_\infty(X,t) = \|(X,t)\|_{\ell^\infty},\] 
where we have used that
$(\hat{x},\hat{t}) = 0$.  Since the $\ell^2$ and $\ell^\infty$ versions of the parabolic distance
are comparable, we have that
\begin{equation}\label{eq3.15aa}
r_1:=\|(X,t)\|\approx \delta(X,t)\approx r\,,
\end{equation} with implicit constants depending only on dimension. 

Set
\begin{equation}\label{eq3.16}
P_{X,t} := Q_{cr}(X,t)\,,\quad P_{X,t}^- := Q^-_{cr}(X,t)\,,
\end{equation}
where $c<1/1000$ is a small fixed positive constant to be chosen momentarily.  Then by
\cite[Theorem 3]{M},  we have that
\begin{equation}\label{eq3.17}
u(X,t) \lesssim \left(\tiltfiint_{P^-_{X,t}} |u({\bf Y})|^2 d{\bf Y}\right)^{1/2}\,.
\end{equation}

Let $S_0$ denote the spherical cap
\[S_0:= \left\{(\zeta,\tau) \in \mathbb{S}^n:\, \|(\zeta,\tau)-\pi_{\text par}(X,t) \|< \pi/1000\right\}.\]
For the sake of notational convenience, we shall write
\[ \xi=(\zeta,\tau)\,,\quad \rho^{(1,2)} \xi := (\rho\zeta,\rho^2\tau)\]
to denote, respectively, points on the unit sphere $\mathbb{S}^n$, and on the parabolic sphere of
radius $\rho$ (expressed in parabolic polar coordinates; see Definition \ref{polar}).

Then for $c$ in \eqref{eq3.16}  chosen small enough, we have that 
\[ P_{X,t}^- \subset \A_{X,t}\,,\]
where $\A_{X,t}$ is the region given in parabolic polar coordinates by
\[\A_{X,t} := \left\{ \rho^{(1,2)} \xi:\, \xi \in S_0,\, r_1/2 <\rho <R(\xi)\right\}\,,
\]
where $r_1 \approx r$ is defined in \eqref{eq3.15aa}, and $R(\xi)$ is defined appropriately so that 
$R(\xi)\lesssim r$, uniformly in $\xi$, and so that  $\A_{X,t} \subset \Omega$\footnote{
We need be this careful only if $T_{max}-t \lesssim r^2$, otherwise,
we could simply set $R(\xi) = Cr$}.  In fact, more generally, 
\begin{equation}\label{eq3.18}
\Gamma_{X,t} :=  \left\{ \rho^{(1,2)} \xi:\, \xi \in S_0,\, 0 <\rho <R(\xi)\right\}\subset \Omega\,.
\end{equation}
Of course, $\Gamma_{X,t}$ is just a truncated version of
 the parabolic cone  $\Gamma$ (see Definition \ref{conedef}) 
with vertex at $0=(\hat{x},\hat{t})$, 
in the direction $\pi_{\text par}(X,t)$, with aperture $\pi/1000$. 

Then by \eqref{eq3.17} and the fact that $P_{X,t}^- \subset \A_{X,t}$, we have
\begin{align*}
u(X,t) & \lesssim 
\left(r^{-n-2} \int_{S_0}\int_{r_1/2}^{R(\xi)} 
\left| u\big(\rho^{(1,2)}\xi\big)\right|^2 \, \rho^{n+1}d\rho \,d\mu(\xi)\right)^{1/2}
\\[4pt]&\lesssim \left(r^{-n-2} \int_{S_0}\int_{r_1/2}^{R(\xi)} 
\left| u\big(\rho^{(1,2)}\xi\big)- u\big((\epsilon r)^{(1,2)}\xi\big)\right|^2
 \, \rho^{n+1}d\rho \,d\mu(\xi)\right)^{1/2} + \, O(\epsilon^\alpha)
\\[4pt]& =:\, I \,+\, O(\epsilon^\alpha)
\end{align*}
where we have used 
parabolic polar coordinates  (Definition \ref{polar}), 
and where the ``big-O" term
\[ \left(r^{-n-2} \int_{S_0}\int_{r_1/2}^{R(\xi)} 
\left| u\big((\epsilon r)^{(1,2)}\xi\big)\right|^2
 \, \rho^{n+1}d\rho \,d\mu(\xi)\right)^{1/2} \approx
\left(\int_{S_0} \left|u\big((\epsilon r)^{(1,2)}\xi\big)\right|^2\,d\mu(\xi)\right)^{1/2}\]
has been estimated using first that
$r_1 \approx r\approx R(\xi)$, and then Lemma \ref{H} and the fact that 
$u$ vanishes continuously on $2\Delta_{X,t}'$, which is centered at $(\hat{x},\hat{t}) =0$, and has 
parabolic diameter $\approx r$.

It remains to control term $I$ by appropriate localized square functions.  To this end, 
using that $\rho \approx r$ in $\A_{X,t}$, we write
\begin{align*}
 I^2 &=r^{-n-2}\iint_{\A_{X,t}}\Big|\int_{\epsilon r}^{\rho} 
\partial_{q} u(q^{(1,2)}\xi)dq \Big|^2 \rho^{n+1}d\rho d\mu(\xi)
\\[4pt]&\lesssim \iint_{\A_{X,t}} \int_{\epsilon r}^\rho |\nabla u(q^{(1,2)} \xi)|^2 dq d\rho d\mu(\xi) 
+ \iint_{\A_{X,t}} \int_{\epsilon r}^\rho q^2 |\partial_s u(q^{(1,2)} \xi)|^2 dqd\rho d\mu(\xi)\\[4pt]
&:=I^2_1+I^2_2,
\end{align*}
We note first that
\begin{multline*} I_1^2
 \lesssim_\epsilon r^{-n-2}\iint_{\A_{X,t}} \int_{\epsilon r}^\rho 
|\nabla u(q^{(1,2)} \xi)|^2  q^{n+2} dq d\rho d\mu(\xi) \\[4pt]
\lesssim_\epsilon   r^{-n-1}\int_{S_0} \int_{\epsilon r}^{R(\xi)}
|\nabla u(q^{(1,2)} \xi)|^2  q^{n+2} dq  d\mu(\xi)
\\[4pt]\approx_\epsilon r^{-n-1}
 \iint_{\A_*} \big|\nabla u(Y,s)\big|^2 \delta(Y,s) dY\, ds
 \\[4pt] \lesssim_\epsilon \sigma\big(\Delta_{X,t}\big)^{-1}
  \iint_{Q_{X,t}\cap\Omega} \big|\nabla u(Y,s)\big|^2 \delta(Y,s) dY\, ds\,,
\end{multline*}
where the region $\A_*$ is given in parabolic polar co-ordinates by
\[\A_* := \left\{ q^{(1,2)} \xi:\, \xi \in S_0,\, \epsilon r <q <R(\xi)\right\}\,,
\]
and where in the last step we have used  \eqref{eq3.15aa},  \eqref{eq3.18}, ADR,
and the definitions
of $Q_{X,t}$ and $\Delta_{X,t}$.

Similarly,
\begin{multline*} I_2^2 \lesssim_\epsilon
r^{-n-2} \iint_{\A_{X,t}} \int_{\epsilon r}^\rho q^{n+4} |\partial_s u(q^{(1,2)} \xi)|^2 dqd\rho d\mu(\xi)
\\[4pt]
\lesssim_\epsilon   r^{-n-1}\int_{S_0} \int_{\epsilon r}^{R(\xi)}
|\partial_s u(q^{(1,2)} \xi)|^2  q^{n+4} dq  d\mu(\xi)
\\[4pt]
\approx_\epsilon r^{-n-1}
 \iint_{\A_*} \big|\partial_s u(Y,s)\big|^2 \delta^3(Y,s) dY\, ds
 \\[4pt]
\lesssim_\epsilon  \sigma\big(\Delta_{X,t}\big)^{-1}
 \iint_{\A_*} \big|\partial_s u(Y,s)\big|^2 \delta^3(Y,s) dY\, ds\,.
\end{multline*}

This concludes the proof of Claim 2, and hence of Theorem \ref{tmain}, modulo the proof of Lemma \ref{3.5}.
\end{proof}

\begin{proof}[Proof of Lemma \ref{3.5}]
Let $\zeta\in C_0^{\infty}(\ree),$   
$$\supp(\zeta)\subset B(0,1)\,,\quad \zeta \equiv 1 \,\, {\rm on}\,\, B(0,1/2)\,,\quad 0\leq \zeta\leq 1\,.  $$
Given $\nu\in (0,ar/1000)$, and $\bx, \bz:=(z,\tau) \in \Sigma$, set   
$$\Lambda_{\nu}(\bx,\bz):= b(\bx,\nu)^{-1} \zeta\left(\dfrac{\bx-\bz}{\nu^{\alpha}}\right)\,,$$
for $\alpha=(1,1,...,1,2)$, and
\begin{equation}\label{eq3.11}
b(\bx,\nu):= \iint_{\Sigma} \zeta\left(\dfrac{\bx-\bz}{\nu^{\alpha}}\right)\, d\sigma(\bz)\,\approx \nu^{n+1}\,,
\end{equation}
uniformly in $\bx\in \Sigma$, by the ADR property. Furthermore,
\begin{align*}
    \iint_{\Sigma} \Lambda_{\nu}(\bx,\bz)d\sigma(\bz)\equiv 1, \quad \forall \bx\in \Sigma.
\end{align*}
We now define
\begin{align*}
    f_\nu(\bx):=\iint_{\Sigma} \Lambda_{\nu}(\bx,\bz) f(\bz)d\sigma(\bz),
\end{align*}
so that $f_\nu$ is continuous, by construction. Let us now verify (1)-(4) of
Lemma \ref{3.5}. We obtain (1) immediately, by \eqref{est}, and the properties of $\Lambda_\nu$, while (2) follows directly from the smallness of $\nu$ and the fact that supp$(f)\subset 2Q_1\cap \Sigma$. Next, observe that since $A_1$ is a relatively open set in $\Sigma$, we have that for every $\bx\in \Sigma$,
\begin{align*}
 1_{A_1}(\bx)\leq \underset{\nu\to 0}\liminf \iint_{\Sigma} \Lambda_\nu(\bx,\bz)1_{A_1}(\bz)d\sigma(\bz)
 \leq \underset{\nu\to 0}\liminf f_\nu(\bx),   
\end{align*}
by the last inequality in \eqref{est}. Hence (3) holds.

To prove (4), we observe that the second inequality is simply a re-statement of the second inequality in \eqref{est}, so it suffices to show that
\begin{align}\label{3.9}
    ||f_\nu||_{BMO(\Sigma)}\lesssim ||f||_{BMO(\Sigma)}, \qquad \text{uniformly in } \nu.
\end{align}
To this end, we fix a surface cube $\Delta=\Delta(\by,r)$, and we consider two cases.

\noindent{\bf Case 1:} $\nu\geq r.$ In this case, set $c:=\tiltfiint_{\Delta(\bx, 2\nu)} f$, so that by 
ADR, \eqref{eq3.11} and the construction of $\Lambda_\nu$,
\begin{align*}
    \tiltfiint_{\Delta} |f_\nu-c|\,d\sigma \lesssim \tiltfiint_{\Delta}\tiltfiint_{\Delta(\bx, 2\nu)} |f-c|\, 
    d\sigma d\sigma \lesssim ||f||_{BMO(\Sigma)}.
\end{align*}

\noindent{\bf Case 2:} $\nu< r$. In this case, set $c:=\tiltfiint_{2\Delta} f$. Then by Fubini's Theorem,
\begin{align*}
    \tiltfiint_{\Delta} |f_\nu(\bx)-c|\,d\sigma(\bx) \lesssim \tiltfiint_{2\Delta} |f(\bz)-c|\iint_{\Sigma} \Lambda_\nu(\bx,\bz)d\sigma(\bx)d\sigma(\bz) \lesssim ||f||_{BMO(\Sigma)},
\end{align*}
where again we have used ADR, \eqref{eq3.11} and the compact support property of $\Lambda_\nu(\bx,\bz).$

Since these bounds are uniform all over $\by \in \Sigma$, and $r\in (0,\diam(\Sigma))$, we obtain \eqref{3.9}.
\end{proof}


\begin{thebibliography}{9999}
 
\bibitem[AGMT]{AGMT}J. Azzam, J. Garnett, M. Mourgoglou and X. Tolsa, Uniform rectifiability, elliptic measure, square functions, and $\epsilon$-approximability via an ACF monotonicity formula.
Preprint 2016. arXiv:1612.02650.

\bibitem[BL]{BL} B. Bennewitz and J.L. Lewis, On weak reverse H\"{o}lder inequalities
for nondoubling harmonic measures, {\it Complex Var. Theory Appl.} {\bf 49} (2004), no. 7--9, 571-582.


\bibitem[BiM]{BiM} M. Biroli and U. Mosco, Wiener estimates at boundary points for parabolic
equations, {\em Annali di Matematica Pura ed Applicata}
{\bf 141},  353Ð367.







\bibitem[Ch]{Ch} M. Christ,  A $T(b)$ theorem with remarks on analytic
capacity and the Cauchy integral, {\it Colloq. Math.}, {\bf LX/LXI} (1990), 601--628.




\bibitem[DKP]{DKP} M. Dindos,  C.E. Kenig, and J. Pipher, BMO solvability and 
the $A_\infty$ condition for elliptic operators, {\it J. Geom. Analysis} {\bf 21} (2011),  78-95.

\bibitem[D]{D} J. L. Doob, Classical Potential Theory and Its Probabilistic Counterpart, {\it Springer}, 1984. 

\bibitem[EG]{EG} L. C. Evans and R. F. Gariepy, Wiener's criterion for the heat equation,
{\em Arch. Rational Mech. and Analysis} {\bf 78} (1982), 293-314.

\bibitem[FGL]{FGL} E. B. Fabes, N. Garofalo and E. Lanconelli,
Wiener's criterion for divergence form parabolic operators with $C^1$-Dini 
continuous coefficients, {\em Duke Math. J.}
{\bf 59} (1989), 191-232.

\bibitem[FR1]{FR1} E.B. Fabes and N.M. Riviere, {\it Symbolic Calculus of
Kernels with Mixed Homogeneity}, Singular Integrals, A.P. Calder\'on, Ed.,
Proc. Symp. Pure Math., Vol. 10, Amer. Math. Soc., Providence, 1967, pp.
106-127.

\bibitem[FR2]{FR2} E.B. Fabes and N.M. Riviere, {\it Singular Integrals with mixed homogeneity}, Studia
Math. 27 (1966), 19-38.




\bibitem[GZ]{GZ} R. Gariepy and W. P.  Ziemer, Thermal capacity and boundary regularity. {\it J. Differential Equations} {\bf 45} (1982), no. 3, 374–388.

\bibitem[GMT]{GMT} J. Garnett, M. Mourgoglou and X. Tolsa,
Uniform rectifiability from Carleson measure estimates 
and $\eps$-approximability of bounded harmonic functions, 
{\em Duke Math. J.} {\bf 167} (2018), No. 8, 1473-1524.

\bibitem[GL]{GL} N. Garofalo and E. Lanconelli,
Wiener's criterion for parabolic equations with variable
coefficients and its consequences, {\em Trans. Amer. Math. Soc.}
{\bf 308} (1988), 811-836.

\bibitem[GH]{GH} A. Genschaw and S. Hofmann, A weak reverse H{\"o}lder inequality for parabolic measure. arXiv: 1809.10510






\bibitem[HLe]{HLe} S. Hofmann and P. Le, BMO solvability and absolute continuity of harmonic measure, {\it The Journal of Geometric Analysis}, (2017), 1-22. 


\bibitem[HLMN]{HLMN}  S. Hofmann, P. Le, J. M. Martell and K. Nystr\"om,
The weak-$A_\infty$ property of harmonic and $p$-harmonic measures
implies uniform rectifiability, 
{\it Analysis and PDE} {\bf 10} (2017),  513-558.

\bibitem[HM]{HM} S. Hofmann and J.M. Martell,  
Uniform Rectifiability and harmonic measure IV: Ahlfors regularity plus 
Poisson kernels in $L^p$ implies uniform rectifiability, unpublished manuscript, {\it arXiv:1505.06499. }

\bibitem[HMM1]{HMM1} S. Hofmann, J.M. Martell, and S. Mayboroda,  
Uniform Rectifiability,
Carleson measure estimates, and approximation of harmonic functions, {\it Duke Math. J.} {\bf 165} (2016), 2331-2389.

\bibitem[HMM2]{HMM2} S. Hofmann, J.M. Martell, and S. Mayboroda,
Transference of scale-invariant estimates from Lipschitz to Non-tangentially 
accessible to Uniformly rectifiable domains,  in preparation.













\bibitem[Ke]{Ke} C.E. Kenig, {\em Harmonic analysis techniques for second order elliptic boundary value problems}, CBMS Regional Conference Series in Mathematics, \textbf{83}. Published for the Conference Board of the Mathematical Sciences, Washington, DC; by the American Mathematical Society, Providence, RI, 1994.


\bibitem[La]{La} E. Lanconelli, Sul problema di 
Dirichlet per lÕequazione del calore, {\em Ann. Mat. Pura e Appl., IV} {\bf 97} (1973), 83-114.


\bibitem[L]{L} G. Lieberman, Second Order Parabolic Differential Equations, {\it World Scientific Publishing Co., Inc., River Edge, NJ} (1996)

\bibitem[MT]{MT} M. Mourgoglou and X. Tolsa,
Harmonic measure and Riesz transform in uniform and general domains,  {\it J. Reine Angew. Math.}, to appear.

\bibitem[M]{M} J. Moser, A Harnack Inequality for Parabolic Differential Equations, {\it Communications on Pure and Applied Mathematics, Vol. XVII, (1964) 101-134}










\bibitem[R]{R} N.M. Riviere, {\it Singular Integrals and Multiplier Operators
}, Ark. Math. 9 (1971), pp. 243-278.

\bibitem[Su]{Su} N. Suzuki, On the essential boundary and 
supports of harmonic measures for the heat equation, {\em Proc. Japan Acad. Ser. A} {\bf 56} (1980), 381-385.

\bibitem[W1]{W1} N. Watson, The two versions of the Dirichlet problem for the heat equation,
{\em New Zealand Journal of Mathematics} {\bf 45} (2015), 89-110.

\bibitem[W2]{W2} N. Watson, {\em Introduction to Heat Potential Theory}, 
Mathematical Surveys and Monographs vol.182, Amer. Math. Soc., Providence RI, 2012.

\bibitem[Z]{Z} Zihui Zhao, 
BMO solvability and the $A_\infty$ condition of the elliptic measure in uniform domains, (2017)
{\it J. Geom. Analysis} Vol. 28, Issue 2, pp. 866-908.


\end{thebibliography}
\end{document}